\title{$L^q$-spectra of self-affine measures: \\ closed forms, counterexamples, and split binomial sums}
\newtheorem{thm}[equation]{Theorem}
\numberwithin{equation}{section} 
\newtheorem{lem}[equation]{Lemma}
\newtheorem{cor}[equation]{Corollary}
\newtheorem*{cor*}{Corollary}
\theoremstyle{definition}
\newtheorem{ques}[equation]{Question}
\newtheorem{defn}[equation]{Definition}
\theoremstyle{remark}
\theoremstyle{remark}
\date{}
\author{Jonathan M. Fraser, Lawrence D. Lee\footnote{Corresponding Author}, Ian D. Morris \&  Han Yu}
\begin{document}
\maketitle
\begin{abstract}

We study $L^q$-spectra of planar self-affine measures generated by diagonal systems with an emphasis on providing closed form expressions.  We answer a question posed by Fraser in 2016 in the negative by proving that a certain natural closed form expression does not generally give the $L^q$-spectrum and, using a similar approach, find counterexamples to a statement of Falconer-Miao from 2007 and a conjecture of Miao from 2008 concerning a closed form expression for the generalised dimensions of generic self-affine measures.  In the positive direction we provide new non-trivial closed form bounds in both of the above settings, which in certain cases yield sharp results.  We also provide  examples of self-affine measures whose $L^q$-spectra exhibit new types of phase transitions.  Our examples depend on a combinatorial estimate for the exponential growth of certain split binomial sums.

\emph{Mathematics Subject Classification} 2010: primary: 28A80, 37C45, secondary: 15A18, 26A24.

\emph{Key words and phrases}: $L^q$-spectrum, generalised $q$-dimensions, self-affine measure, split binomial sums, modified singular value function, phase transitions.
\end{abstract}


\section{Introduction and summary of results}

The $L^q$-spectrum is an important concept in multifractal analysis and quantifies global fluctuations in a given measure.  In the setting of self-affine measures, the $L^q$-spectrum is notoriously difficult to compute, and is only known in some specific cases, see for example \cite{Feng, Fraser} and in some settings a generic formula is known \cite{Barral, Falconerlq1, Falconerlq2}.  Even in some cases where a formula is known, it is not given by a closed form expression which makes explicit calculations (and theoretical manipulation) difficult.  Some attention has been paid to the provision of closed form expressions in \cite{Miao, Fraser,Miao1} and these works provide the main motivation for this one.

First we consider the setting of Fraser \cite{Fraser} and Feng-Wang \cite{Feng}, where the self-affine measures are generated by diagonal systems.  Fraser \cite[Theorem 2.10]{Fraser} provided closed form expressions for the $L^q$-spectra in many cases, but often required some extra assumptions on the defining system.  He asked if these technical assumptions could be removed and if his formula held in general \cite[Question 2.14]{Fraser}.  We answer this question in the negative by providing an explicit family of counterexamples, see Theorem \ref{counterexample}.  Despite the fact that the predicted closed form expression does not hold, we are able to provide new, non-trivial, closed form bounds for the $L^q$-spectra, see Theorem \ref{Lower Bound Theorem}.   We also provide  examples of self-affine measures whose $L^q$-spectra exhibit new types of phase transitions, see Theorem \ref{phasethm}.  Specifically, we construct examples where the $L^q$-spectrum is differentiable at $q=1$ but not analytic in any neighbourhood of $q=1$.

Secondly, we consider the setting of Falconer-Miao \cite{Miao} and Miao \cite{Miao1} where the self-affine measures are generated by upper triangular matrices.   The paper \cite{Miao} was mainly concerned with dimensions of self-affine sets, but towards the end it states a closed form expression for the generalised $q$-dimensions (these are a normalised version of the $L^q$-spectra) in a natural generic setting \cite[Theorem 4.1]{Miao}.  The proof of this result was just sketched and when the result appeared later in Miao's thesis \cite[Theorem 3.11]{Miao1} the full proof was only given for $0<q<1$ and the formula only conjectured to hold for $q>1$.  We show that this formula and conjecture of Miao are false for $q>1$ in general by  providing an explicit family of counterexamples, see Theorem \ref{Miao Counterexample}. We are able to provide new, non-trivial, closed form bounds for the generalised $q$-dimensions, see Theorem \ref{Upper Bound Theorem} and also give new conditions which guarantee that the conjectured formula does hold, see Corollary \ref{Equality}.

A key technical tool is the following growth result for split binomial sums: if one considers the binomial expansion of $(1+x)^k$, where $x>1$ is fixed, and splits the sum in half, then the ratio of the two halves grows exponentially in $k$, see  Theorem \ref{binomial}.

\section{Preliminaries and split binomial sums}

 For background on iterated function systems (IFS) see \cite{Falconer}. We recall some basic definitions.\\

\begin{defn}[Self-affine set]
Suppose we have an IFS $\lbrace S_{i}\rbrace_{i\in\mathcal{I}}$ consisting of contracting affine transformations of $\mathbb{R}^n$ where $\mathcal{I}$ is some finite index set. Then there is a unique non-empty, compact set $F$ satisfying
\begin{equation*}
F=\bigcup_{i\in\mathcal{I}}S_{i}(F)
\end{equation*}
which we call the \textit{self-affine set} associated to $\lbrace S_{i}\rbrace_{i\in\mathcal{I}}$.
\end{defn}

\vspace{5mm}

We are interested in measures on such sets. A natural type of measure on self-affine sets one can construct is a \textit{self-affine} measure.

\vspace{5mm}

\begin{defn}[Self-affine measure]
Suppose we have a self-affine set $F$ given by the IFS $\lbrace S_{i}\rbrace_{i\in\mathcal{I}}$ acting on $\mathbb{R}^n$, and a probability vector $\lbrace p_{i}\rbrace_{i\in\mathcal{I}}$ with each $p_{i}\in (0, 1)$. Then there is a unique Borel probability measure $\mu$ on $\mathbb{R}^n$ satisfying
\begin{equation*}
\mu=\sum_{i\in \mathcal{I}}p_{i}\ \mu  \circ  S_{i}^{-1}
\end{equation*}
which we call the \textit{self-affine measure} associated to $\lbrace S_{i}\rbrace_{i\in\mathcal{I}}$ and  $\lbrace p_{i}\rbrace_{i\in\mathcal{I}}$.
\end{defn}

We close this section with a technical result which states that a certain split binomial sum ratio  grows exponentially.   This result will be used to provide counterexamples later in the paper.\\

\begin{thm}\label{binomial}
Let $x>1$, then
\[\lim_{k \to \infty} \left(\frac{\sum_{i=\lceil k/2\rceil}^k {k \choose i}x^i}{\sum_{i=0}^{\lfloor k/2\rfloor} {k \choose i} x^i}\right)^{\frac{1}{k}}=\frac{1+x}{2\sqrt{x}}>1\]
where the limit is taken along odd integers $k$.
\end{thm}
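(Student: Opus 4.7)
The plan is to exploit the fact that, for odd $k$, the two sums partition $\{0,1,\dots,k\}$: since $\lfloor k/2\rfloor + 1 = \lceil k/2\rceil$, the numerator $N_k$ and denominator $D_k$ satisfy $N_k + D_k = (1+x)^k$ exactly. Probabilistically, $\binom{k}{i} x^i / (1+x)^k$ is the mass function of a $\mathrm{Bin}(k, x/(1+x))$ variable and, since $x/(1+x) > 1/2$, the bulk of this distribution sits above $k/2$; so $D_k$ captures an exponentially small tail while $N_k$ captures almost all of $(1+x)^k$. I intend to extract the precise exponential rate of $D_k$ directly from unimodality together with Stirling's formula, bypassing the need to invoke any large deviations machinery.

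First I would observe that on $\{0, 1, \ldots, (k-1)/2\}$ the terms $\binom{k}{i} x^i$ are strictly increasing in $i$, since the ratio of consecutive terms is $x(k-i)/(i+1) > 1$ whenever $i < (k-1)/2$ and $x > 1$. Hence the largest term of $D_k$ is the final one, at $i=(k-1)/2$, giving the two-sided estimate
\[ \binom{k}{(k-1)/2} x^{(k-1)/2} \;\le\; D_k \;\le\; \tfrac{k+1}{2}\binom{k}{(k-1)/2} x^{(k-1)/2}. \]
Stirling's formula gives $\binom{k}{(k-1)/2} \sim \sqrt{2/(\pi k)}\,2^k$, so $\binom{k}{(k-1)/2}^{1/k} \to 2$, and squeezing yields $D_k^{1/k} \to 2\sqrt{x}$.

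For the numerator, the estimate just obtained combined with the identity $N_k + D_k = (1+x)^k$ shows $D_k/(1+x)^k \to 0$ (since $(2\sqrt{x}/(1+x))^k \to 0$ because $(1+x)^2 - 4x = (1-x)^2 > 0$ for $x \ne 1$), so $N_k \ge (1+x)^k/2$ for all sufficiently large $k$ and $N_k^{1/k} \to 1+x$. Dividing the two asymptotics gives $(N_k/D_k)^{1/k} \to (1+x)/(2\sqrt{x})$, and the strict inequality $(1+x)/(2\sqrt{x}) > 1$ is again the identity $(1+x)^2 - 4x = (1-x)^2 > 0$.

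I do not expect any serious obstacle: the argument reduces to a careful accounting with Stirling, resting only on the unimodality of the binomial terms and on the trivial eventual lower bound $N_k \ge (1+x)^k/2$. The one point to watch is the restriction to odd $k$, which is used precisely to make the two index ranges disjoint and complementary so that the identity $N_k + D_k = (1+x)^k$ holds exactly; for even $k$ the middle term $\binom{k}{k/2} x^{k/2}$ would appear in both sums, and although its $k$-th root only contributes $2\sqrt{x}$ and is therefore harmless to the final limit, the clean additive decomposition on which the argument relies would break.
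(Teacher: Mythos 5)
Your proposal is correct and shares the same skeleton as the paper's proof: both rest on the exact partition identity $N_k + D_k = (1+x)^k$ for odd $k$, and both reduce the problem to determining the exponential rate of $D_k$, concluding that $D_k^{1/k}\to 2\sqrt{x}$. Where you differ is in how that rate is extracted. You note that the summands $\binom{k}{i}x^i$ are increasing on $\{0,\dots,(k-1)/2\}$, trap $D_k$ between its largest term and $\tfrac{k+1}{2}$ times that term, and invoke Stirling to get $\binom{k}{(k-1)/2}^{1/k}\to 2$. The paper avoids Stirling entirely: it lower-bounds $\binom{k}{\lfloor k/2\rfloor}$ by $2^k/(k+1)$ (the maximum of $k+1$ nonnegative summands of $2^k$ is at least the average), and upper-bounds $D_k$ by $2^k x^{\lfloor k/2\rfloor}$ simply by replacing each $x^i$ with $x^{\lfloor k/2\rfloor}$; it then sandwiches the ratio $N_k/D_k = (1+x)^k/D_k - 1$ directly rather than computing $N_k^{1/k}$ separately as you do. Both routes are valid and of comparable length; the paper's is marginally more elementary (no Stirling), while yours makes explicit the dominant-term and binomial-tail picture that motivates the result.
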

\begin{proof}
Fix $x>1$ and let $k \geq 1$ be odd. Since ${k \choose i} \leq {k \choose \lfloor k/2\rfloor}$ for all $i=0,\ldots,k$ we have $ {k \choose \lfloor k/2\rfloor} \geq \frac{1}{k+1}\sum_{i=0}^k {k \choose i} =\frac{2^k}{k+1}$. Hence
\[\frac{2^kx^{\lfloor k/2\rfloor}}{k+1} \leq {k \choose \lfloor k/2\rfloor} x^{\lfloor k/2 \rfloor} \leq \sum_{i=0}^{\lfloor k/2\rfloor} {k \choose i}x^i \leq \sum_{i=0}^k {k \choose i}x^{\lfloor k/2\rfloor}=2^kx^{\lfloor k/2\rfloor}.\]
It follows that on the one hand
\[\frac{\sum_{i=\lceil k/2\rceil}^k {k \choose i}x^i}{\sum_{i=0}^{\lfloor k/2\rfloor} {k \choose i} x^i}= \frac{\sum_{i=0}^k {k \choose i}x^i -  \sum_{i=0}^{\lfloor k/2\rfloor} {k \choose i} x^i}{\sum_{i=0}^{\lfloor k/2\rfloor} {k \choose i} x^i}=\frac{(1+x)^k}{\sum_{i=0}^{\lfloor k/2\rfloor}{k \choose i}} -1\geq \frac{(1+x)^k}{2^k x^{\lfloor k/2\rfloor}} -1\]
and on the other hand
\[\frac{\sum_{i=\lceil k/2\rceil}^k {k \choose i}x^i}{\sum_{i=0}^{\lfloor k/2\rfloor} {k \choose i} x^i}\leq \frac{\sum_{i=0}^k {k \choose i}x^i}{\sum_{i=0}^{\lfloor k/2\rfloor} {k \choose i}x^i}=\frac{(1+x)^k}{\sum_{i=0}^{\lfloor k/2\rfloor}{k \choose i}x^i} \leq \frac{(k+1)(1+x)^k}{2^k x^{\lfloor k/2\rfloor}}.\]
Since $\frac{1+x}{2\sqrt{x}}>1$ by the arithmetic-geometric mean inequality the result follows easily.
\end{proof}

\section{Diagonal systems and the $L^q$-spectrum}

We now turn to the first class of IFS we shall study and introduce the $L^q$-spectrum of the associated self-affine measure. We begin by introducing the necessary background from \cite{Fraser, Fraser1}.


\vspace{5mm}

\begin{defn}[$L^q$-spectrum] If $\mu$ is a Borel probability measure on $\mathbb{R}^n$ with support denoted by $\textnormal{supp}(\mu)$ then the upper and lower $L^q$-spectrum of $\mu$ are defined to be 
\begin{equation*}
\overline{\tau}_{\mu}(q)= \overline{\lim}_{\delta\rightarrow 0}\frac{\log\int_{\textnormal{supp}(\mu)}\ \mu(B(x, \delta))^{q-1} \ d\mu(x)}{-\log\delta}
\end{equation*}
and 
\begin{equation*}
\underline{\tau}_{\mu}(q)= \underline{\lim}_{\delta\rightarrow 0}\frac{\log\int_{\textnormal{supp}(\mu)}\ \mu(B(x, \delta))^{q-1} \ d\mu(x)}{-\log\delta}.
\end{equation*}
respectively. If these two values coincide we define the $L^q$-spectrum of $\mu$, denoted $\tau_{\mu}(q)$, to be the common value.
\end{defn}

This quantity is of special interest in multifractal analysis due to its relationship with the fine multifractal spectrum. In particular if the multifractal formalism holds then the fine multifractal spectrum of $\mu$ is given by the Legendre transform of $\tau_{\mu}$ (for details see \cite{Olsen}).


\vspace{5mm}

\begin{defn}[Diagonal System]
We say a self-affine IFS is a \textit{diagonal system} if it is an IFS consisting of affine transformations of $\mathbb{R}^2$ whose linear part is a contracting diagonal matrix. 
\end{defn}

Note that necessarily the maps that make up diagonal systems are of the form $S_{i}=T_{i}+t_{i}$, where $T_{i}$ is a contracting linear map of the form 
\begin{equation*}
T_{i}=\begin{pmatrix}
  \pm c_{i} & 0  \\
  0 &  \pm d_{i} 
 \end{pmatrix}
\end{equation*} 
with $c_{i}, d_{i}\in (0,1)$ and $t_{i}\in\mathbb{R}^2$ is a translation vector. 







We shall also assume that our IFS satisfies the following separation condition.

\vspace{5mm}

\begin{defn}[Rectangular Open Set Condition]
We say an IFS acting on $\mathbb{R}^2$ satisfies the \textit{Rectangular Open Set Condition} (ROSC) if there exists a non-empty open rectangle $R=(a, b)\times(c, d)\subset \mathbb{R}^2$ such that $\lbrace S_{i}(R)\rbrace_{i\in\mathcal{I}}$ are pairwise disjoint subsets of $R$.
\end{defn}

In order to calculate the $L^q$-spectrum $\tau_{\mu}(q)$ of such measures, Fraser introduced what he termed a \textit{q-modified singular value function}. To introduce this we begin by defining the projection maps $\pi_{1},\pi_{2}:\mathbb{R}^2\rightarrow\mathbb{R}$ by $\pi_{1}(x, y)=x$ and $\pi_{2}(x, y)=y$. It may be shown that the projections of the measure $\mu$, namely  $\pi_{1}(\mu)$ and $\pi_{2}(\mu)$, are a pair of self-similar measures. Therefore, it follows from a result of Peres and Solomyak \cite{Peres} that the $L^q$-spectra of both of these projected measures, which we denote by $\tau_{1}(q):=\tau_{\pi_{1}(\mu)}(q)$ and $\tau_{2}(q):=\tau_{\pi_{2}(\mu)}(q)$, exist for $q\geq 0$.   

Let $\mathcal{I}^*=\bigcup_{k\geq 1}\mathcal{I}^k$ denote the set of all finite sequences with entries in $\mathcal{I}$. For $\boldsymbol{i}=(i_{1},\dots, i_{k})\in\mathcal{I}$ let $S_{\boldsymbol{i}}= S_{i_{1}}\circ S_{i_{2}}\circ \cdots\circ S_{i_{k}}$ and let $p(\boldsymbol{i})= p_{i_{1}}p_{i_{2}}\cdots p_{i_{k}}$. Also write $\alpha_{1}(\boldsymbol{i})\geq \alpha_{2}(\boldsymbol{i})$ for the singular values of the linear part of $S_{\boldsymbol{i}}$ and write $c({\boldsymbol{i}})=c_{i_{1}}c_{i_{2}}\cdots c_{i_{k}}$ and $d({\boldsymbol{i}})=d_{i_{1}}d_{i_{2}}\cdots d_{i_{k}}$.  In particular, for all $\boldsymbol{i}=(i_{1},\dots, i_{k})\in\mathcal{I}^*$,  $\alpha_{1}(\boldsymbol{i}) = \max \{c({\boldsymbol{i}}), d({\boldsymbol{i}})\}$ and  $\alpha_{2}(\boldsymbol{i}) = \min \{c({\boldsymbol{i}}), d({\boldsymbol{i}})\}$.


Now define $\pi_{\boldsymbol{i}}:\mathbb{R}^2\rightarrow \mathbb{R}$ by
\begin{equation*}
\pi_{\boldsymbol{i}}=\left\{
        \begin{array}{ll}
            \pi_{1} & \quad \textnormal{if}\  c(\boldsymbol{i})\geq d(\boldsymbol{i}) \\
             \pi_{2} & \quad  \textnormal{if} \ d(\boldsymbol{i})< c(\boldsymbol{i})\\
        
        \end{array}
    \right.
\end{equation*}
and subsequently define $\tau_{\boldsymbol{i}}(q)$ by $\tau_{\boldsymbol{i}}(q):=\tau_{\pi_{\boldsymbol{i}}(\mu)}(q)$. Note that $\tau_{\boldsymbol{i}}(q)$ is simply the $L^q$-spectrum of the projection of $\mu\vert_{S_{\boldsymbol{i}}(F)}$ onto the longest side of the rectangle $S_{\boldsymbol{i}}([0, 1]^2)$ and is always equal to either $\tau_1(q)$ or $\tau_2(q)$.

For $s\in\mathbb{R}$ and $q\geq 0$, define the \textit{q-modified singular value function}, $\psi^{s, q}:\mathcal{I}^*\rightarrow (0,\infty)$  by
\begin{equation*}
\psi^{s, q}(\boldsymbol{i}) = p(\boldsymbol{i})^q\alpha_{1}(\boldsymbol{i})^{\tau_{\boldsymbol{i}}(q)}\alpha_{2}(\boldsymbol{i})^{s-\tau_{\boldsymbol{i}}(q)}
\end{equation*}
and for each $k\in\mathbb{N}$ define the value $\Psi_{k}^{s, q}$ by
\begin{equation}\label{Psi}
\Psi_{k}^{s, q}=\sum_{\boldsymbol{i}\in\mathcal{I}^k}\psi^{s, q}(\boldsymbol{i}).
\end{equation}
It now follows from Lemma 2.2 in \cite{Fraser} and standard properties of sub-multiplicative sequences that we may define a function $P:\mathbb{R}\times [0, \infty)\rightarrow [0,\infty)$ by
\begin{equation*}
P(s, q)= \lim_{k\rightarrow\infty}(\Psi_{k}^{s, q})^{1/k}.
\end{equation*}

It follows from Lemma 2.3 in \cite{Fraser} that we may define another function, $\gamma:[0, \infty)\rightarrow\mathbb{R}$, by $P(\gamma(q), q)=1$. We shall refer to this function as a \textit{moment scaling function}. The importance of this function is the following theorem from \cite{Fraser}.\\

\begin{thm}$\cite[\textnormal{ Theorem 2.6}]{Fraser}$
Suppose that $\mu$  is generated by a diagonal system and satisfies the ROSC. Then
\begin{equation*}
\tau_{\mu}(q)=\gamma(q).
\end{equation*}
\end{thm}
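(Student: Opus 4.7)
The plan is to establish both inequalities $\overline{\tau}_\mu(q)\leq\gamma(q)\leq\underline{\tau}_\mu(q)$ by comparing the $L^q$-integral at scale $\delta$ with a sum of the $q$-modified singular value function $\psi^{s,q}$ taken over an adaptive antichain of cylinders. First I would introduce the stopping set $\mathcal{I}(\delta)\subset\mathcal{I}^*$ consisting of those $\boldsymbol{i}$ with $\alpha_2(\boldsymbol{i})\leq\delta<\alpha_2(\boldsymbol{i}^{-})$, where $\boldsymbol{i}^{-}$ denotes $\boldsymbol{i}$ with its last letter removed. By the ROSC the cylinder rectangles $\{S_{\boldsymbol{i}}(R)\}_{\boldsymbol{i}\in\mathcal{I}(\delta)}$ are pairwise disjoint, have shortest side $\asymp\delta$ and longest side $\alpha_1(\boldsymbol{i})$, and partition $\mathrm{supp}(\mu)$ up to a null set, with $\mu(S_{\boldsymbol{i}}(F))=p(\boldsymbol{i})$.

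The second step is to overlay a $\delta$-grid $\mathcal{Q}_\delta$ on $\mathbb{R}^2$ and use the standard comparison
\[
\int\mu(B(x,\delta))^{q-1}\,d\mu(x)\;\asymp\;\sum_{Q\in\mathcal{Q}_\delta}\mu(Q)^q,
\]
up to multiplicative constants depending only on $q$ and the dimension. Each cylinder $S_{\boldsymbol{i}}(R)$ meets $O(\alpha_1(\boldsymbol{i})/\delta+1)$ grid squares, and for any such square $Q$ the preimage $S_{\boldsymbol{i}}^{-1}(Q)$ is a rectangle elongated across $\mathrm{supp}(\mu)$ in the short direction of the cylinder. Hence $\mu(Q)\asymp p(\boldsymbol{i})\,\pi_{\boldsymbol{i}}(\mu)(I_Q)$, where $I_Q$ is an interval of length $\asymp\delta/\alpha_1(\boldsymbol{i})$. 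Applying the Peres--Solomyak formula uniformly to the (finitely many) self-similar projections $\pi_{\boldsymbol{i}}(\mu)$ yields $\sum_Q \pi_{\boldsymbol{i}}(\mu)(I_Q)^q\asymp(\delta/\alpha_1(\boldsymbol{i}))^{-\tau_{\boldsymbol{i}}(q)}$, and combining this with $\alpha_2(\boldsymbol{i})\asymp\delta$ on $\mathcal{I}(\delta)$ gives, for every real $s$,
\[
\int \mu(B(x,\delta))^{q-1}\,d\mu(x) \;\asymp\; \delta^{-s}\sum_{\boldsymbol{i}\in\mathcal{I}(\delta)}\psi^{s,q}(\boldsymbol{i}).
\]

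The final step is to convert the antichain sum into the uniform-length quantities $\Psi_k^{s,q}$ whose growth rate defines $P(s,q)$. Using the submultiplicativity of $\Psi_k^{s,q}$ (Lemma~2.2 of Fraser) together with a pigeonhole argument that splits $\mathcal{I}(\delta)$ according to word length $k\asymp\log(1/\delta)/\log(1/\alpha_{\min})$, one shows that $\sum_{\boldsymbol{i}\in\mathcal{I}(\delta)}\psi^{s,q}(\boldsymbol{i})$ has the same exponential growth rate in $\log(1/\delta)$ as $\Psi_k^{s,q}$, and hence grows like $P(s,q)^{k}$ up to sub-exponential factors. Taking $s=\gamma(q)$ so that $P(s,q)=1$ forces this sum to be sub-exponential in $\log(1/\delta)$, yielding $\int \mu(B(x,\delta))^{q-1}\,d\mu(x) = \delta^{-\gamma(q)+o(1)}$ and therefore $\tau_\mu(q)=\gamma(q)$.

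I expect the main technical obstacle to be this last reduction: the antichain contains words of many different lengths, and $\psi^{s,q}$ is only submultiplicative along concatenations (because $\tau_{\boldsymbol{i}}(q)$ depends on which side of $S_{\boldsymbol{i}}(R)$ is longer, a property not preserved under concatenation). Matching the antichain sum to $P(s,q)$ requires a careful two-sided pigeonhole on word lengths. A secondary difficulty is the initial comparison $\int\mu(B)^{q-1}d\mu\asymp\sum_Q\mu(Q)^q$ in the range $0\leq q<1$, which needs a Besicovitch-type covering argument in place of the straightforward estimate available for $q\geq 1$.
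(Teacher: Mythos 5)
This theorem is cited in the present paper from Fraser's 2016 Transactions paper and is \emph{not} reproved here, so there is no in-paper proof to compare against; what follows compares your sketch with Fraser's argument in the cited reference. Your plan follows essentially the same route: adapt a stopping antichain $\mathcal{I}(\delta)$ to scale $\delta$, cover each cylinder with approximate squares, feed in the known $L^q$-spectra of the two self-similar projections (Peres--Solomyak), and reduce the resulting sum to the pressure $P(s,q)$. At the level of overall strategy you are faithful to the original.

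Three of the $\asymp$'s in your sketch are asserting more than you actually have, and these gaps are precisely where the real work in Fraser's proof sits. First, Peres--Solomyak yields $\sum_I\pi(\mu)(I)^q=r^{-\tau(q)+o(1)}$ with an error depending on $r$, not a two-sided bound with multiplicative constants; since $\delta/\alpha_1(\boldsymbol{i})$ ranges over all of $[\delta,1]$ as $\boldsymbol{i}$ runs through $\mathcal{I}(\delta)$, you need to make that estimate uniform over all scales $\leq r_0$ (an easy but necessary lemma) before you can sum over $\boldsymbol{i}$ without leaking exponential error. Second, the identification $\mu(Q)\asymp p(\boldsymbol{i})\,\pi_{\boldsymbol{i}}(\mu)(I_Q)$ tacitly assumes each $\delta$-grid square meets a single cylinder; what the ROSC actually gives is bounded overlap, so $\mu(Q)$ is a sum of $O(1)$ such contributions and you must invoke the quasi-additivity of $x\mapsto x^q$ (with constants depending on $q$, and with the direction of the inequality flipping across $q=1$, which is the source of the Besicovitch-type care you flag). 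Third, the antichain-to-$\Psi_k^{s,q}$ conversion does not follow from pigeonholing on lengths alone: a typical word $\boldsymbol{w}\in\mathcal{I}^k$ has a prefix $\boldsymbol{i}\in\mathcal{I}(\delta)$ and a tail $\boldsymbol{j}$, and one needs $\psi^{s,q}(\boldsymbol{i}\boldsymbol{j})\leq C\,\psi^{s,q}(\boldsymbol{i})\,\psi^{s,q}(\boldsymbol{j})$ with a constant absorbing the possible change of which axis is dominant between $\boldsymbol{i}$ and $\boldsymbol{i}\boldsymbol{j}$. That submultiplicativity (Fraser's Lemma~2.2) plus the normalisation $\Psi_k^{\gamma(q),q}\geq 1$ is what yields both directions of $\sum_{\boldsymbol{i}\in\mathcal{I}(\delta)}\psi^{\gamma(q),q}(\boldsymbol{i})=\delta^{o(1)}$, and it is not automatic from the definition of $\psi$. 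None of this is a wrong turn — your architecture matches Fraser's — but these three points are the content of the theorem, not remarks to be absorbed into $\asymp$.
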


This tells us that finding a closed form expression for  $\tau_{\mu}(q)$  is equivalent to finding a closed form expression form $\gamma(q)$. 

Note that we may approximate $\gamma(q)$ numerically by functions $\gamma_{k}(q)$, where for each $k\in\mathbb{N}$ we define $\gamma_{k}(q):[0, \infty)\rightarrow\mathbb{R}$ by
\begin{equation*}
\Psi^{\gamma_{k}(q), q}_{k}=1.
\end{equation*}
In order to find a closed form expression Fraser defined functions $\gamma_{A}, \gamma_{B}:[0, \infty)\rightarrow\mathbb{R}$ by 
\begin{equation*}
\sum_{i\in\mathcal{I}} p_i^q \ c_i^{\tau_{1}(q)}\ d_i^{\gamma_{A}(q)-\tau_{1}(q)} = 1
\end{equation*}
and
\begin{equation*}
\sum_{i\in\mathcal{I}} p_i^q\ d_i^{\tau_{2}(q)}\ c_i^{\gamma_{B}(q)-\tau_{2}(q)} =1.
\end{equation*}
The following lemma tells us some useful information about the relationship between $\gamma_{A}, \gamma_{B}$ and $\tau_{1},\tau_{2}$.\\

\begin{lem}$\cite[\textnormal{ Lemma 2.9}]{Fraser}$
Let $q\geq 0$. Then either
\begin{equation*}
\max\lbrace\gamma_{A}(q), \gamma_{B}(q)\rbrace\leq\tau_{1}(q)+\tau_{2}(q)
\end{equation*}
or
\begin{equation*}
\min\lbrace\gamma_{A}(q), \gamma_{B}(q)\rbrace\geq\tau_{1}(q)+\tau_{2}(q).
\end{equation*}
\end{lem}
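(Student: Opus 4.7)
The plan is to exploit the defining equations of $\gamma_A(q)$ and $\gamma_B(q)$ together with a monotonicity argument. Introduce the two auxiliary functions
\[F_A(s) = \sum_{i\in\mathcal{I}} p_i^q\, c_i^{\tau_1(q)}\, d_i^{s-\tau_1(q)}, \qquad F_B(s) = \sum_{i\in\mathcal{I}} p_i^q\, d_i^{\tau_2(q)}\, c_i^{s-\tau_2(q)},\]
so that by definition $F_A(\gamma_A(q)) = 1$ and $F_B(\gamma_B(q)) = 1$. Since every $c_i,d_i \in (0,1)$, both $F_A$ and $F_B$ are strictly decreasing in $s$, so each $\gamma$ is the unique zero crossing of the corresponding function minus one.

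The decisive step is to compute $F_A$ and $F_B$ at the single test point $s = \tau_1(q) + \tau_2(q)$. A direct substitution collapses both exponents to $\tau_1(q)$ and $\tau_2(q)$ respectively, and one sees that
\[F_A(\tau_1(q)+\tau_2(q)) \;=\; \sum_{i\in\mathcal{I}} p_i^q\, c_i^{\tau_1(q)}\, d_i^{\tau_2(q)} \;=\; F_B(\tau_1(q)+\tau_2(q)).\]
Denote this common value by $S$. This identity is the entire content of the lemma packaged as a single observation.

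Now apply monotonicity. Because $F_A$ is strictly decreasing and $F_A(\gamma_A(q))=1$, the inequality $\gamma_A(q) \leq \tau_1(q)+\tau_2(q)$ holds if and only if $F_A(\tau_1(q)+\tau_2(q)) \leq 1$, that is, if and only if $S \leq 1$. By exactly the same reasoning applied to $F_B$, the inequality $\gamma_B(q) \leq \tau_1(q)+\tau_2(q)$ also holds if and only if $S \leq 1$. Therefore the positions of $\gamma_A(q)$ and $\gamma_B(q)$ relative to $\tau_1(q)+\tau_2(q)$ are governed by the same dichotomy: if $S \leq 1$ then both lie below, yielding $\max\{\gamma_A(q),\gamma_B(q)\} \leq \tau_1(q)+\tau_2(q)$, while if $S \geq 1$ then both lie above, yielding $\min\{\gamma_A(q),\gamma_B(q)\} \geq \tau_1(q)+\tau_2(q)$.

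There is no substantive obstacle here, since the argument reduces to the algebraic coincidence that $F_A$ and $F_B$ take the same value at $\tau_1(q)+\tau_2(q)$. The only things one must check in passing are the strict monotonicity of $F_A$ and $F_B$ (immediate from $c_i,d_i \in (0,1)$), the existence and uniqueness of $\gamma_A(q)$ and $\gamma_B(q)$ as solutions of $F_A(s)=1$ and $F_B(s)=1$, and the existence of $\tau_1(q)$ and $\tau_2(q)$, which was already noted via the Peres--Solomyak result.
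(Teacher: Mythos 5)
Your proof is correct and complete. The paper itself cites this lemma from Fraser's 2016 paper without reproving it, so there is no internal proof to compare against; nevertheless, your argument is exactly the natural one (and is, modulo notation, the argument in the cited source): the key observation that both $F_A$ and $F_B$ evaluate to the common value $\sum_{i\in\mathcal{I}} p_i^q\, c_i^{\tau_1(q)}\, d_i^{\tau_2(q)}$ at $s = \tau_1(q)+\tau_2(q)$, combined with strict monotonicity of $F_A$ and $F_B$ (since $c_i, d_i \in (0,1)$), immediately forces $\gamma_A(q)$ and $\gamma_B(q)$ to lie on the same side of $\tau_1(q)+\tau_2(q)$.
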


This lemma is particularly helpful as it allows us to state Fraser's main result on closed form expressions  from \cite{Fraser}.  \\

\begin{thm}$\cite[\textnormal{ Theorem 2.10}]{Fraser}$\label{Fraser1}
Let $\mu$ be generated by a diagonal system and $q\geq 0$. 

If $\max\lbrace\gamma_{A}(q), \gamma_{B}(q)\rbrace\leq\tau_{1}(q)+\tau_{2}(q)$ then
\begin{equation*}
\gamma(q)=\max\lbrace\gamma_{A}(q), \gamma_{B}(q)\rbrace.
\end{equation*}
If $\min\lbrace\gamma_{A}(q), \gamma_{B}(q)\rbrace\geq\tau_{1}(q)+\tau_{2}(q)$, then
\begin{equation*}
\tau_{1}(q)+\tau_{2}(q) \leq \gamma(q)\leq\min\lbrace\gamma_{A}(q), \gamma_{B}(q)\rbrace
\end{equation*}
and  if either
\begin{equation*}
\sum_{i\in\mathcal{I}}p_{i}^q \ c_{i}^{\tau_{1}(q)} \ d_{i}^{\gamma_{A}(q)-\tau_{1}(q)}\ \log(c_{i}/d_{i})\geq 0
\end{equation*}
or
\begin{equation*}
\sum_{i\in\mathcal{I}}p_{i}^q \ d_{i}^{\tau_{2}(q)} \ c_{i}^{\gamma_{B}(q)-\tau_{2}(q)}\ \log(d_{i}/c_{i})\geq 0
\end{equation*}
then  $\gamma(q)=\min\lbrace\gamma_{A}(q), \gamma_{B}(q)\rbrace$.
\end{thm}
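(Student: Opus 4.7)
The plan is to analyse $\Psi_k^{s,q}$ by decomposing $\mathcal{I}^k$ according to whether $c(\boldsymbol{i})\geq d(\boldsymbol{i})$ or $c(\boldsymbol{i})<d(\boldsymbol{i})$, and to compare the result with the two auxiliary ``pure'' sums
\[
A_k(s,q) := \sum_{\boldsymbol{i}\in\mathcal{I}^k} p(\boldsymbol{i})^q c(\boldsymbol{i})^{\tau_1(q)} d(\boldsymbol{i})^{s-\tau_1(q)}, \qquad
B_k(s,q) := \sum_{\boldsymbol{i}\in\mathcal{I}^k} p(\boldsymbol{i})^q d(\boldsymbol{i})^{\tau_2(q)} c(\boldsymbol{i})^{s-\tau_2(q)}.
\]
Both are completely multiplicative, so by the definitions of $\gamma_A(q)$ and $\gamma_B(q)$ we have $A_k(\gamma_A(q),q)=B_k(\gamma_B(q),q)=1$ for every $k$, while $A_k(s,q)$ grows exponentially when $s<\gamma_A(q)$ (and analogously for $B$). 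Writing $\Psi_k^{s,q}=\sum_{c\geq d}(A\text{-summand})+\sum_{c<d}(B\text{-summand})$, the decisive computation is that on a word with $c(\boldsymbol{i})<d(\boldsymbol{i})$ the ratio of the $B$-summand to the $A$-summand equals $(d(\boldsymbol{i})/c(\boldsymbol{i}))^{\tau_1(q)+\tau_2(q)-s}$, which is $\geq 1$ iff $s\leq \tau_1(q)+\tau_2(q)$. This identifies $\tau_1+\tau_2$ as the critical exponent at which the two natural summands swap dominance on the ``swapped'' words, cleanly explaining the bifurcation in the theorem's hypotheses.

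In Case 1, $\max\{\gamma_A,\gamma_B\}\leq\tau_1+\tau_2$, I assume without loss of generality $\gamma_A\geq\gamma_B$. The trivial bound $\Psi_k^{s,q}\leq A_k(s,q)+B_k(s,q)$ together with monotonicity of $A_k,B_k$ in $s$ yields $\Psi_k^{\gamma_A,q}\leq 2$, hence $P(\gamma_A,q)\leq 1$ and $\gamma(q)\leq\gamma_A$. For the matching lower bound I take $s<\gamma_A\leq\tau_1+\tau_2$; the $B\geq A$ inequality on $c<d$ words gives $\Psi_k^{s,q}\geq A_k(s,q)$, which grows exponentially since $s<\gamma_A$, so $\gamma(q)\geq s$ for every $s<\gamma_A$ and hence $\gamma(q)\geq\gamma_A$. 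In Case 2, $\min\{\gamma_A,\gamma_B\}\geq\tau_1+\tau_2$, the inequality reverses: on $c<d$ words $A\geq B$, so $\Psi_k^{\gamma_A,q}\leq A_k(\gamma_A,q)=1$ (and symmetrically for $\gamma_B$), giving $\gamma(q)\leq\min\{\gamma_A,\gamma_B\}$; the lower bound $\gamma(q)\geq\tau_1+\tau_2$ then follows from the Case 1 argument applied to any $s<\tau_1+\tau_2\leq\min\{\gamma_A,\gamma_B\}$.

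For the equality claim under the first additional hypothesis the approach is probabilistic. The vector $\nu_i := p_i^q c_i^{\tau_1(q)} d_i^{\gamma_A(q)-\tau_1(q)}$ is a probability measure on $\mathcal{I}$ by the definition of $\gamma_A$, so drawing $i_1,\ldots,i_k$ i.i.d.\ from $\nu$ makes $\log(c(\boldsymbol{i})/d(\boldsymbol{i}))=\sum_{j=1}^k \log(c_{i_j}/d_{i_j})$ a random walk whose step mean $\sum_i \nu_i \log(c_i/d_i)$ is precisely the quantity in the hypothesis and is therefore nonnegative. Hence $\nu^{\otimes k}(c(\boldsymbol{i})\geq d(\boldsymbol{i}))=\sum_{\boldsymbol{i}: c\geq d}\nu_{i_1}\cdots\nu_{i_k}$ remains bounded below by a positive constant uniformly in $k$: by the law of large numbers when the mean is strictly positive, by the central limit theorem when it is zero, and trivially in the degenerate case $c_i=d_i$ for all $i$. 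Since this quantity is exactly the restriction of $\Psi_k^{\gamma_A,q}$ to $c\geq d$ words, $\Psi_k^{\gamma_A,q}$ is uniformly bounded below by a positive constant, giving $P(\gamma_A,q)\geq 1$ and hence $\gamma(q)\geq\gamma_A$. Combined with the Case 2 upper bound this forces $\gamma(q)=\gamma_A=\min\{\gamma_A,\gamma_B\}$; the case of the second hypothesis is symmetric.

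The hardest step I expect is the boundary case of the log hypothesis: when the $\nu$-mean vanishes, the law of large numbers alone does not keep $\nu^{\otimes k}(c\geq d)$ bounded away from zero and the central limit theorem must be invoked, with a separate treatment of the subcase where the increments are identically zero. Apart from this subtlety, the whole argument is driven by the single identity for the $B/A$ summand ratio, which encodes the geometry of what happens when iterated rectangles change orientation.
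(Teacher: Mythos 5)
Your proof is correct. A point of bookkeeping first: this paper cites Theorem \ref{Fraser1} from Fraser's earlier work and does not reprove it, so the closest comparison available here is the paper's proof of Theorem \ref{Lower Bound Theorem}, which is explicitly described as a quantitative version of the same argument. Your handling of the first two parts (the upper bound $\gamma(q)\le\max\{\gamma_A,\gamma_B\}$ or $\le\min\{\gamma_A,\gamma_B\}$ via $\Psi_k^{s,q}\le A_k(s,q)+B_k(s,q)$, and the lower bounds via $\Psi_k^{s,q}\ge A_k(s,q)$ when $s\le\tau_1+\tau_2$) is driven by the clean ratio identity $(d(\boldsymbol{i})/c(\boldsymbol{i}))^{\tau_1(q)+\tau_2(q)-s}$, which is precisely the content of the quoted Lemma 2.9 and is faithful to the original mechanism. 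One small point worth making explicit: under the first log-hypothesis your argument produces $\gamma(q)\ge\gamma_A(q)$; combined with the already-established $\gamma(q)\le\min\{\gamma_A(q),\gamma_B(q)\}$ this forces $\gamma_A(q)\le\gamma_B(q)$, so the stated conclusion $\gamma(q)=\min\{\gamma_A(q),\gamma_B(q)\}$ follows — this is a genuine deduction, not a hidden assumption, but it is worth flagging.

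Where you genuinely diverge is the equality step. The paper's quantitative argument (proof of Theorem \ref{Lower Bound Theorem}) is a method-of-types computation: it fixes a probability vector $\{\theta_i\}$, restricts to words of length $n(k)$ with empirical letter counts $\lfloor\theta_ik\rfloor$, estimates the multinomial coefficient by Stirling's formula, optimizes over $\theta$, and handles the boundary case $\prod_ic_i^{\theta_i}=\prod_id_i^{\theta_i}$ by passing to a subsequence and demanding that both log-conditions hold. Your argument instead identifies the restriction of $\Psi_k^{\gamma_A(q),q}$ to $\{c(\boldsymbol{i})\ge d(\boldsymbol{i})\}$ with the probability $\nu^{\otimes k}(c\ge d)$, where $\nu_i=p_i^q c_i^{\tau_1(q)}d_i^{\gamma_A(q)-\tau_1(q)}$, and applies the law of large numbers (strictly positive drift), the central limit theorem (zero drift, nondegenerate), or triviality (all $c_i=d_i$) to show this probability stays bounded away from zero, whence $P(\gamma_A(q),q)\ge 1$. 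Both routes pivot on the same vector $\nu$, but yours is a direct probabilistic shortcut that dispatches the delicate zero-mean boundary case very cleanly via the CLT, at the cost of yielding only the qualitative conclusion $\ge 1$; the Stirling route is clumsier there but is quantitative, which is exactly what this paper needs in order to extract the explicit closed-form bounds of Theorem \ref{Lower Bound Theorem} when the log-hypothesis fails. Both are valid, and neither subsumes the other.
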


The fact that we only have an inequality involving $\gamma(q)$ when $\min\lbrace\gamma_{A}(q), \gamma_{B}(q)\rbrace\geq\tau_{1}(q)+\tau_{2}(q)$,  combined with the observation that the above conditions (the sums involving logarithms) do not look especially natural, led Fraser to ask the following question.\\

\begin{ques}$\cite[\textnormal{ Question 2.14}]{Fraser}$ \label{fraserquestion}

If $\min\lbrace\gamma_{A}(q), \gamma_{B}(q)\rbrace\geq\tau_{1}(q)+\tau_{2}(q)$ and neither 
\begin{equation*}
\sum_{i\in\mathcal{I}}p_{i}^q \ c_{i}^{\tau_{1}(q)} \ d_{i}^{\gamma_{A}(q)-\tau_{1}(q)}\ \log(c_{i}/d_{i})\geq 0
\end{equation*}
nor
\begin{equation*}
\sum_{i\in\mathcal{I}}p_{i}^q \ d_{i}^{\tau_{2}(q)} \ c_{i}^{\gamma_{B}(q)-\tau_{2}(q)}\ \log(d_{i}/c_{i})\geq 0
\end{equation*}
are satisfied, is it still true that
\begin{equation*}
\gamma(q)=\min\lbrace\gamma_{A}(q), \gamma_{B}(q)\rbrace?
\end{equation*}
\end{ques}

By presenting a family of counterexamples we shall answer this question in the negative. In particular we  provide a family of diagonal systems consisting of two maps equipped with the Bernoulli-(1/2, 1/2) measure such that 
\[
\gamma(q)<\min\lbrace\gamma_{A}(q), \gamma_{B}(q)\rbrace
\]
for all $q> 1$.

\subsection{A family of counterexamples}

We now turn our attention to the provision of  examples answering Question \ref{fraserquestion} in the negative. We require a family of measures such that the two conditions in Theorem \ref{Fraser1} fail. At the same time we also need to ensure that they are simple enough to allow us to estimate $\Psi_{k}^{s, q}$ (\ref{Psi}) effectively. We prove the following result, which states that, for a certain explicit family of self-affine measures generated by diagonal systems, $\tau_{\mu}(q)$ is not equal to either  $\gamma_A(q)$ or $\gamma_B(q)$ for all $q>1$. Theorem \ref{binomial} will be of key importance  in establishing this result. \\

\begin{thm}\label{counterexample}
Let $c, d$ be such that $c>d>0$ and $c+d\leq 1$. Let  $\mu$ be the self-affine measure defined by the probability vector $(1/2, 1/2)$ and the diagonal system consisting of the two maps, $S_{1}$ and $S_{2}$, where
\[
S_{1}(x, y)=\begin{pmatrix}
  c & 0  \\
  0 &  d
 
 \end{pmatrix}\begin{pmatrix}
  x  \\
  y 
 
 \end{pmatrix}
\qquad \text{ and  } \qquad 
S_{2}(x, y)=\begin{pmatrix}
  d & 0  \\
  0 &  c
 
 \end{pmatrix}\begin{pmatrix}
  x  \\
  y 
 
 \end{pmatrix}+\begin{pmatrix}
  1-d  \\
  1-c 
 
 \end{pmatrix}.
\]
Then, for  $q>1$,
\[
\gamma(q)<\min\lbrace\gamma_{A}(q), \gamma_{B}(q)\rbrace.
\]
More precisely, for $q>1$, $\gamma_A(q) = \gamma_B(q)<0$ and, writing $s$ to denote this common value,
\begin{equation}\label{Quantitative}
\gamma(q) \leq s - \frac{2 \log \left(\frac{2 (d/c)^{s/2}}{(d/c)^s+1}\right)}{\log(cd)}.
\end{equation}
\end{thm}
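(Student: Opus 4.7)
The plan is to prove the theorem in three steps: identify $\gamma_A(q) = \gamma_B(q)$ with the common projected spectrum $t$, compute $\Psi_k^{s,q}$ explicitly using the symmetry of the two-map system, and apply the elementary estimate from the proof of Theorem \ref{binomial} to deduce the quantitative upper bound on $\gamma(q)$. To start, observe that $\pi_1(\mu)$ and $\pi_2(\mu)$ are self-similar measures on $\mathbb{R}$ whose defining data differ only by swapping the contraction ratios $c$ and $d$, so by Peres--Solomyak they share the same $L^q$-spectrum: $\tau_1(q) = \tau_2(q) = t$ with $c^t + d^t = 2^q$. Plugging $s = t$ into the defining equations for $\gamma_A,\gamma_B$ shows both are solved, so $\gamma_A(q) = \gamma_B(q) = t$. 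Since $c^0 + d^0 = 2 < 2^q$ and $s \mapsto c^s + d^s$ is strictly decreasing, $t < 0$ for $q > 1$.

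For the closed form of $\Psi_k^{s,q}$, since $\tau_\mathbf{i}(q) = t$ for every $\mathbf{i}$ we have $\psi^{s,q}(\mathbf{i}) = 2^{-kq} \alpha_1(\mathbf{i})^t \alpha_2(\mathbf{i})^{s-t}$. For $\mathbf{i} \in \{1,2\}^k$ with $i$ ones, $c(\mathbf{i})d(\mathbf{i}) = (cd)^k$ and $c(\mathbf{i})/d(\mathbf{i}) = (c/d)^{2i-k}$, so $\alpha_1 \alpha_2 = (cd)^k$ is independent of $\mathbf{i}$ while $\alpha_1/\alpha_2 = (c/d)^{|2i-k|}$ depends only on $|2i-k|$. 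Setting $y = (c/d)^{t - s/2}$, grouping by $i$, and using the symmetry $i \mapsto k - i$ for odd $k$ yields
\[
\Psi_k^{s,q} \;=\; 2^{1-kq}(cd)^{ks/2}\, y^{-k} \sum_{i = \lceil k/2 \rceil}^k \binom{k}{i}\, y^{2i}.
\]

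To obtain the upper bound, substitute $j = k - i$ in the inner sum and set $x = y^{-2} > 1$ (valid in the regime $s > 2t$); the sum becomes $y^{2k} \sum_{j=0}^{\lfloor k/2 \rfloor} \binom{k}{j} x^j$. The elementary estimate $\sum_{j=0}^{\lfloor k/2 \rfloor} \binom{k}{j} x^j \leq 2^k x^{\lfloor k/2 \rfloor}$ appearing in the proof of Theorem \ref{binomial} then gives $\Psi_k^{s,q} \leq 2y \cdot (2^{1-q}(cd)^{s/2})^k$ for odd $k$. Choosing $s_0 = 2(q-1)\log 2 / \log(cd)$ makes the bracketed base equal $1$, so $\Psi_k^{s_0,q} \leq 2y$ uniformly in odd $k$; hence $P(s_0, q) \leq 1$ and $\gamma(q) \leq s_0$. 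A direct algebraic manipulation using $c^t + d^t = 2^q$ verifies that $s_0$ coincides with the right-hand side of (\ref{Quantitative}) at $s = t$. The strict inequality $\gamma(q) < s$ then follows because $s_0 < s$, a consequence of the strict AM--GM bound $(d/c)^t + 1 > 2(d/c)^{t/2}$ (which holds since $t \neq 0$).

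The main technical point I anticipate is verifying that the chosen $s_0$ indeed lies in the regime $y < 1$, equivalently $s_0 > 2t$, in which the estimate above applies. This reduces to $(cd)^t > 2^{q-1}$, which I would establish as follows: since $d < c$ and $t < 0$, $c^t \leq d^t$; combined with $c^t + d^t = 2^q$ this forces $d^t \geq 2^{q-1}$, and since $c \in (0,1)$ and $t < 0$ we also have $c^t > 1$, so $c^t d^t > 2^{q-1}$.
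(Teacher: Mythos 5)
Your proposal is correct and takes a genuinely different route from the paper's. The paper first evaluates $P(s,q)$ exactly at $s=\gamma_A(q)$: it writes $\Psi_k^{s,q}=X_k^q+Y_k^q$, applies the full statement of Theorem~\ref{binomial} to show $(X_k^q)^{1/k},(Y_k^q)^{1/k}\to\delta:=\frac{2(d/c)^{s/2}}{(d/c)^s+1}\in(0,1)$, concludes $P(s,q)=\delta<1$ and hence $\gamma(q)<s$, and then \emph{upgrades} to the quantitative bound in a second step via the crude inequality $\alpha_1(\boldsymbol{i})\geq(cd)^{k/2}$, which gives $\delta=P(s,q)\geq(cd)^{\varepsilon/2}$. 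You instead write a closed form for $\Psi_k^{s,q}$ at an \emph{arbitrary} $s$, then invoke only the single one-line estimate $\sum_{j=0}^{\lfloor k/2\rfloor}\binom{k}{j}x^j\leq 2^kx^{\lfloor k/2\rfloor}$ from the proof of Theorem~\ref{binomial} (not the theorem itself), obtaining $\Psi_k^{s,q}\leq 2y\cdot(2^{1-q}(cd)^{s/2})^k$; choosing $s_0$ to kill the base gives $P(s_0,q)\leq1$ and hence $\gamma(q)\leq s_0$ in one shot. The algebra $\delta=2^{1-q}(cd)^{s/2}$ (using $c^s+d^s=2^q$) confirms $s_0$ coincides with the right-hand side of~(\ref{Quantitative}), and your check that $(cd)^s>2^{q-1}$ correctly guards the regime $x=y^{-2}\geq 1$ where the binomial estimate applies. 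Your argument buys a shorter proof that bypasses the limit computation entirely and never needs the lower half of the split-binomial estimate; what it loses relative to the paper is the exact identification $P(\gamma_A(q),q)=\delta$, which is what makes the role of the split binomial ratio transparent (and which the paper reuses in Theorem~\ref{Miao Counterexample}). One small thing to make explicit when writing this up: the bound $\Psi_k^{s_0,q}\leq 2y$ holds only along odd $k$, so you should appeal to the submultiplicativity of $\Psi_k^{s,q}$ (Fekete's lemma, giving $P(s,q)=\inf_k(\Psi_k^{s,q})^{1/k}$) to pass to the full limit; the paper's proof relies on the same fact implicitly.
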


\begin{proof}

Let $q>1$. We begin by noting that due to the relative simplicity of the maps we are working with it is straightforward to show that $\tau_1(q)=\tau_2(q)=\gamma_A(q)=\gamma_B(q)$. We shall denote this common value by $s$, and also note that $s<0$.

Let $k$ be odd. We may write $\Psi_k^{s,q}$ as 
\begin{equation}\label{Psi Calculation}
\begin{split}
\Psi_k^{s,q}&=\sum_{\boldsymbol{i}\in\mathcal{I}^k}p_{\boldsymbol{i}}^q\ \alpha_1(\boldsymbol{i})^{\tau_{\boldsymbol{i}}(q)} \ \alpha_2(\boldsymbol{i})^{s-\tau_{\boldsymbol{i}}(q)} \\
 & = \sum_{\boldsymbol{i}\in\mathcal{I}^k}2^{-kq}\ \alpha_1(\boldsymbol{i})^s,\\
 \end{split}
\end{equation}
using the fact that $p=1/2$ and $s=\tau_1(q)=\tau_2(q)$. Since the maps $S_1$ and $S_2$ commute, we can write each $S_{\boldsymbol{i}}$ ($\boldsymbol{i}\in\mathcal{I}^k$) as $S_{\boldsymbol{i}} = S_1^i \circ S_2^{k-i}$ where $i \in [0,k]$ is the number of times $S_1$ was used in the composition of $S_{\boldsymbol{i}}$.  For such maps, since $c>d$,
\[
\alpha_1(\boldsymbol{i})=c^{\max\{i, k-i\}}\times d^{\min\{i, k-i\}}
\]
and we can re-express (\ref{Psi Calculation}) as
\[
\Psi_k^{s,q}= X_k^{q}+Y_k^{q},
\]
where
\[
X_k^{q}=\sum_{i=0}^{\lfloor k/2 \rfloor}\binom{k}{i}2^{-kq}\left(c^{k-i}d^{i}\right)^s
\]
and 
\[
Y_k^{q}=\sum_{i=\lceil k/2 \rceil}^{k}\binom{k}{i}2^{-kq}\left(d^{k-i}c^{i}\right)^s.
\]
We now consider the ratio $X_k^{q}/(1-X_k^{q})$. By our binomial result (Theorem \ref{binomial}) and the definition of $s=\gamma_{A}(q)$,
\[
\sum_{i=0}^{k}\binom{k}{i}2^{-kq}\left(c^{k-i}d^{i}\right)^s=\left(2^{-q}c^{\gamma_{A}(q)} +2^{-q}d^{\gamma_{A}(q)} \right)^k=1^k=1
\]
and therefore 
\[
\frac{X_k^{q}}{1-X_k^{q}}=\frac{\sum_{i=0}^{\lfloor k/2 \rfloor}\binom{k}{i}2^{-kq}\left(c^{k-i}d^{i}\right)^s}{\sum_{i=\lceil k/2 \rceil}^{k}\binom{k}{i}2^{-kq}\left(c^{k-i}d^{i}\right)^s}.
\]
We may rearrange (and cancel a factor $2^{-kq}c^{ks}$) to give
\[
\frac{X_k^{q}}{1-X_k^{q}}=\frac{\sum_{i=0}^{\lfloor k/2 \rfloor}\binom{k}{i}\left(\left(d/c\right)^s\right)^i}{\sum_{i=\lceil k/2 \rceil}^{k}\binom{k}{i}(\left(d/c\right)^s)^i}.
\]
We note that as $c>d$ and as $s<0$ we have $(d/c)^s>1$. Thus by Theorem \ref{binomial}, 
\[
\left(\frac{X_k^{q}}{1-X_k^{q}} \right)^{1/k} \to \frac{2 (d/c)^{s/2}}{(d/c)^s+1} =: \delta \in (0,1)
\]
as $k \to \infty$. Thus we also have  $\left(X_k^{q}\right)^{1/k} \to \delta$ as $k \to \infty$.  By following similar reasoning  we can deduce the same result for $Y_k^{q}$. In particular,
\begin{equation}\label{Binomial Calculation}
\frac{Y_k^{q}}{1-Y_k^{q}}=\frac{\sum_{i=\lceil k/2 \rceil}^{k}\binom{k}{i}2^{-kq}\left(d^{k-i}c^{i}\right)^s}{\sum_{i=0}^{\lfloor k/2 \rfloor}\binom{k}{i}2^{-kq}\left(d^{k-i}c^{i}\right)^s}=\frac{\sum_{i=\lceil k/2 \rceil}^{k}\binom{k}{i}\left(d^{k-i}c^{i}\right)^s}{\sum_{i=0}^{\lfloor k/2 \rfloor}\binom{k}{i}\left(d^{k-i}c^{i}\right)^s}
\end{equation}

which equals
\begin{equation}\label{Binomial Calculation}
 \frac{\sum_{j=0}^{\lfloor k/2 \rfloor}\binom{k}{j}\left((d/c)^s \right)^j}{\sum_{j=\lceil k/2 \rceil}^{k}\binom{k}{j}\left((d/c)^s\right)^j}
\end{equation}
(this follows from relabelling the summation by $j=k-i$ and using the fact that $\binom{k}{k-j}=\binom{k}{j}$).
Note that (\ref{Binomial Calculation}) gives exactly the same as the expression we found for $X_k^{q}/(1-X_k^{q})$ earlier, and so we must also have $\left(Y_k^{q}\right)^{1/k} \to \delta$ as $k \to \infty$.  Therefore
\[
P(s, q)  = \lim_{k\rightarrow\infty}\left(\Psi_k^{s,q} \right)^{1/k}= \lim_{k\rightarrow\infty}\left(X_k^{q} + Y_k^{q} \right)^{1/k}=\delta<1
\]
and by definition of $P(t,q)$ and $\gamma(q)$
\[
 P(\gamma(q), q) = 1> \delta =  P(s, q) .
\]
Since $P(t,q)$ is decreasing in $t$ $\gamma(q)<s=\gamma_A(q)=\gamma_B(q)$, which is enough to show that $\gamma(q)<\min\{\gamma_A(q),\gamma_B(q)\}$.  We can upgrade this result to get the stated quantitative upper bound (\ref{Quantitative}) by considering the function $P(t,q)$ more closely.   For  $k \geq 1$ and $\boldsymbol{i}\in\mathcal{I}^k$,  $\alpha_1(\boldsymbol{i}) \geq (cd)^{ k/2  }$ and therefore, for $\varepsilon = s-\gamma(q)>0$,
\begin{eqnarray*}
\delta = P(s, q)= \lim_{k\rightarrow\infty}\left(\sum_{\boldsymbol{i}\in\mathcal{I}^k}2^{-kq}\ \alpha_1(\boldsymbol{i})^{\gamma(q)+\varepsilon}\right)^{1/k} &\geq& \lim_{k\rightarrow\infty}\left((cd)^{\varepsilon  k/2  }\sum_{\boldsymbol{i}\in\mathcal{I}^k}2^{-kq}\ \alpha_1(\boldsymbol{i})^{\gamma(q)}\right)^{1/k} \\
&=& (cd)^{\varepsilon /2}P(\gamma(q), q) \\
&=& (cd)^{\varepsilon /2}
\end{eqnarray*}
and therefore
\[
s-\gamma(q) = \varepsilon  \geq  \frac{2\log \delta}{\log(cd)}
\]
which proves the theorem.
\end{proof}

\subsection{New examples of phase transitions}
Here we record a simple consequence of Theorem \ref{counterexample} relating to phase transitions. We say that the $L^q$-spectrum $\tau_{\mu}(q)$ exhibits a \textit{first order phase transition} at a point $t\in\mathbb{R}$ if the derivative of $\tau_{\mu}$ is discontinuous at $t$. Likewise we say $\tau_{\mu}(q)$ exhibits an \textit{$n$th order phase transition} at $t\in\mathbb{R}$ if its derivatives up to the $(n-1)$th order are continuous at $t$ but the $n$th order derivative is discontinuous at this point. 

The differentiability of the $L^q$-spectrum is important and has many interesting consequences. Key among these is the fact that if $\tau_{\mu}^{\prime}(1)$ exists then its absolute value gives the Hausdorff dimension of the measure in question, see \cite{Ngai}. We can use Theorem \ref{counterexample} to provide examples of behaviour relating to higher order phase transitions at $q=1$. We are unaware of any other method for constructing such examples.\\


\begin{thm} \label{phasethm}
There exists a planar self-affine measure $\mu$ defined by an IFS satisfying the rectangular opens set condition (ROSC) such that $\tau_{\mu}$, the $L^q$-spectrum of $\mu$, is differentiable at $q=1$ but not analytic in any neighbourhood of $q=1$.
\end{thm}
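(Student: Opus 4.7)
The plan is to use the explicit family of self-affine measures from Theorem \ref{counterexample}. Fix $c > d > 0$ with $c + d \leq 1$; this condition ensures the ROSC holds with the open unit square, since $S_1((0,1)^2)$ and $S_2((0,1)^2)$ are disjoint subrectangles of $(0,1)^2$. Write $s(q)$ for the common value $\gamma_A(q)=\gamma_B(q)=\tau_1(q)=\tau_2(q)$, implicitly defined by $c^{s(q)}+d^{s(q)}=2^q$; since the partial derivative $c^{s}\log c + d^{s}\log d$ never vanishes, the implicit function theorem renders $s$ real-analytic on $[0,\infty)$, with $s(1)=0$ and $s'(1)= 2\log 2/\log(cd)<0$.

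Next I would show that $\tau_\mu=s$ on $[0,1]$: for $q\in[0,1]$ we have $s(q)\geq 0$, so $\max\{\gamma_A,\gamma_B\}=s\leq 2s=\tau_1+\tau_2$, placing us in the first case of Fraser's Theorem \ref{Fraser1}, which yields $\tau_\mu(q)=\gamma(q)=s(q)$. Theorem \ref{counterexample} then gives $\tau_\mu(q)<s(q)$ strictly for every $q>1$. Non-analyticity of $\tau_\mu$ in any neighbourhood of $q=1$ follows at once from the identity theorem: were $\tau_\mu$ real-analytic on an open interval $U\ni 1$, then $\tau_\mu-s$ would be analytic on $U$ and vanish on $U\cap[0,1]$, forcing $\tau_\mu\equiv s$ throughout $U$ and contradicting strict inequality to the right of $1$.

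The substantive part is differentiability at $q=1$. Concavity of $\tau_\mu$ guarantees that one-sided derivatives exist, and by the previous step $\tau_\mu'(1^-)=s'(1)$. Concavity together with $\tau_\mu(q)\leq s(q)$ on $[1,\infty)$ and equality at $q=1$ gives the easy inequality $\tau_\mu'(1^+)\leq s'(1)$. The reverse inequality is equivalent to $s(q)-\tau_\mu(q)=o(q-1)$ as $q\to 1^+$. The quantitative bound (\ref{Quantitative}) of Theorem \ref{counterexample} supplies a lower bound on this gap, which a Taylor expansion shows is of order $(q-1)^2$; the matching upper bound on the gap I would extract from the closed-form lower bound for $\tau_\mu(q)$ provided by Theorem \ref{Lower Bound Theorem} of the present paper. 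Expanding both bounds at $q=1$ should sandwich $s(q)-\tau_\mu(q)=\Theta((q-1)^2)$, and in particular $\tau_\mu'(1^+)=s'(1)=\tau_\mu'(1^-)$.

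The main obstacle is precisely pinning down this matching upper bound on $s(q)-\tau_\mu(q)$: one must Taylor-expand Theorem \ref{Lower Bound Theorem} at $q=1$ carefully and verify that its zeroth- and first-order contributions coincide with those of $s(q)$. An alternative route bypasses the bound entirely by computing $\dim_H\mu$ directly: since the two commuting diagonal maps share equal Lyapunov exponents $\chi=-\tfrac{1}{2}\log(cd)$ under the $(1/2,1/2)$ Bernoulli measure and the measure-theoretic entropy is $\log 2$, a Feng--Hu style dimension formula delivers $\dim_H\mu=\log 2/\chi=-s'(1)$, whereupon the standard sandwich between $\dim_H\mu$ and $-\tau_\mu'(1^\pm)$ forces $\tau_\mu'(1^+)=s'(1)$ and closes the argument.
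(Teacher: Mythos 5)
Your proposal reproduces the paper's strategy for the non-analyticity part almost verbatim: it uses the same family of measures from Theorem~\ref{counterexample}, observes that $\gamma_A=\gamma_B$ is real-analytic via the analytic implicit function theorem, notes that $\tau_\mu$ agrees with this analytic function on $[0,1]$ but is strictly smaller for $q>1$, and invokes the identity theorem. That part is exactly right.

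Where you diverge is the differentiability at $q=1$. The paper handles this in one line by citing \cite[Theorem~2.12]{Fraser}, which asserts that $\gamma$ is differentiable at $q=1$ whenever the projected spectra $\tau_1,\tau_2$ are; since those projections are self-similar measures satisfying the OSC, their $L^q$-spectra are analytic on $(0,\infty)$ and the citation closes the argument immediately. You instead propose to establish differentiability from scratch, via a Taylor sandwich: you note $\tau_\mu'(1^-)=s'(1)$ and $\tau_\mu'(1^+)\le s'(1)$ are automatic, and reduce the reverse inequality to $s(q)-\tau_\mu(q)=o(q-1)$, which you then aim to extract by expanding the closed-form lower bound $L_A(q)$ from Theorem~\ref{Lower Bound Theorem} at $q=1$. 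That expansion does in fact work: in the symmetric two-map case the ratio appearing in $L_A$ is $\bigl((c^s-d^s)\log(c/d)\bigr)/\bigl(c^s\log c+d^s\log d\bigr)$, which is $\Theta(s)=\Theta(q-1)$, and since the prefactor $\gamma_A-\tau_1-\tau_2=-s$ is also $\Theta(q-1)$, one obtains $s-L_A=\Theta\bigl((q-1)^2\bigr)$ and hence $s-\gamma=O\bigl((q-1)^2\bigr)$. So your route is sound, though you flag it yourself as not fully worked out. Your alternative via a Feng--Hu/Ledrappier--Young formula and the Ngai-type sandwich $-\tau_\mu'(1^+)\le\dim_H\mu\le-\tau_\mu'(1^-)$ also works here because the two Lyapunov exponents coincide, but it imports heavier machinery than either the Taylor computation or the paper's citation. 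In short: your approach is correct but substitutes a self-contained (and somewhat more laborious) argument for a result the paper simply quotes; if you carry out the Taylor expansion of $L_A$ as sketched, you get a valid alternative proof.
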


\begin{proof}
Consider the planar self-affine measures considered in Theorem \ref{counterexample}. As the functions $\tau_1, \tau_2$ are the $L^q$-spectra of the measures $\pi_1\mu, \pi_2\mu$ and  these measures are self-similar and satisfy the open set condition,  it follows that they are real analytic on $(0,\infty)$, see \cite[Chapter 17]{Falconer}, (in particular, they are differentiable at $q=1$).  We can therefore apply  Theorem 2.12 in \cite{Fraser} and conclude that the function $\gamma(q)$ is differentiable at $q=1$, so that $\tau_{\mu}=\gamma$ is differentiable at $q=1$.

Observe that the function $\gamma_A =\gamma_B$ is also real analytic on $(0,\infty)$, since it inherits analyticity from $\tau_1, \tau_2$  via the  analytic implicit function theorem. We know that $\gamma(q) = \gamma_A(q) =\gamma_B(q)$ for $q \in [0,1]$ but $\gamma(q) < \gamma_A(q) =\gamma_B(q)$ for $q > 1$, see  Theorem \ref{counterexample}.  It follows that $\tau_{\mu}=\gamma$ cannot be analytic on any neighbourhood of $q=1$.
\end{proof}

\begin{ques}
How many derivatives does $\tau_{\mu}=\gamma$ have at $q=1$ for the measures $\mu$ considered in  Theorem \ref{counterexample}?
\end{ques}


\subsection{New closed form lower bounds}\label{Lower Bounds}

We now know that $\gamma(q)$ is not in general given by either the maximum or minimum of $\gamma_A(q)$ and $\gamma_B(q)$.  However, by developing a quantitative version of the argument in \cite{Fraser} used to prove Theorem \ref{Fraser1} we are able to provide  new closed form lower bounds for $\gamma(q)$ for all planar diagonal systems. Given $x\in\mathbb{R}$ we write $x^+$ to denote the maximum of $x$ and $0$.\\

\begin{thm}\label{Lower Bound Theorem}
Let $\mu$ be a self-affine measure generated by a diagonal system and let $q\geq 0$. Then 
\[
\gamma(q)\geq \max \{L_A(q), L_B(q)\}
\]
where
\[
L_A(q) = \gamma_{A}(q)-\left(\Big(\gamma_{A}(q)-\tau_{1}(q)-\tau_{2}(q)\Big)\frac{\sum_{i\in\mathcal{I}} p_{i}^q \ c_{i}^{\tau_{1}(q)} \ d_{i}^{\gamma_{A}(q)-\tau_{1}(q)}\log (c_{i}/d_{i})}{\sum_{i\in\mathcal{I}} p_{i}^q \ c_{i}^{\tau_{1}(q)} \ d_{i}^{\gamma_{A}(q)-\tau_{1}(q)}\log (c_{i})}\right)^+
\]
and 
\[
L_B(q) = \gamma_{B}(q)-\left(\Big(\gamma_{B}(q)-\tau_{1}(q)-\tau_{2}(q)\Big)\frac{\sum_{i\in\mathcal{I}} p_{i}^q \ d_{i}^{\tau_{2}(q)} \ c_{i}^{\gamma_{B}(q)-\tau_{2}(q)}\log (d_{i}/c_{i})}{\sum_{i\in\mathcal{I}} p_{i}^q \ d_{i}^{\tau_{2}(q)} \ c_{i}^{\gamma_{B}(q)-\tau_{2}(q)}\log (d_{i})}\right)^+.
\]
In particular,
\[
\frac{\sum_{i\in\mathcal{I}} p_{i}^q \ c_{i}^{\tau_{1}(q)} \ d_{i}^{\gamma_{A}(q)-\tau_{1}(q)}\log (c_{i}/d_{i})}{\sum_{i\in\mathcal{I}} p_{i}^q \ c_{i}^{\tau_{1}(q)} \ d_{i}^{\gamma_{A}(q)-\tau_{1}(q)}\log (c_{i})}
\]
and
\[
\frac{\sum_{i\in\mathcal{I}} p_{i}^q \ d_{i}^{\tau_{2}(q)} \ c_{i}^{\gamma_{B}(q)-\tau_{2}(q)}\log (d_{i}/c_{i})}{\sum_{i\in\mathcal{I}} p_{i}^q \ d_{i}^{\tau_{2}(q)} \ c_{i}^{\gamma_{B}(q)-\tau_{2}(q)}\log (d_{i})}
\]
are both strictly less than 1, which ensures that this result provides a strictly better bound than $\gamma(q)\geq \tau_1(q)+\tau_2(q)$ in the case when $\gamma(q) \leq \min\{\gamma_A(q), \gamma_B(q)\}$.
\end{thm}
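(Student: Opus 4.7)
The plan is to show $\gamma(q) \geq L_A(q)$; the bound $\gamma(q) \geq L_B(q)$ then follows by the symmetric argument in which the roles of $c_i,\tau_1$ and $d_i,\tau_2$ are interchanged. Since $P(\gamma(q),q)=1$ and $P(\cdot, q)$ is non-increasing, it suffices to show $P(L_A(q), q) \geq 1$. I would carry this out by way of a Jensen-inequality estimate on $\Psi_k^{s,q}$ taken against a tilted probability measure whose existence is exactly the defining property of $\gamma_A(q)$.

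Concretely, let $\pi^A$ be the probability on $\mathcal{I}$ defined by $\pi^A(i) = p_i^q c_i^{\tau_1(q)} d_i^{\gamma_A(q)-\tau_1(q)}$ (a probability by the definition of $\gamma_A$), and let $\pi_k^A$ denote its $k$-fold product on $\mathcal{I}^k$, so that $\pi_k^A(\boldsymbol{i}) = p(\boldsymbol{i})^q c(\boldsymbol{i})^{\tau_1} d(\boldsymbol{i})^{\gamma_A-\tau_1}$. Splitting $\mathcal{I}^k$ according to the sign of $\log(d(\boldsymbol{i})/c(\boldsymbol{i}))$ and unpacking the definition of $\psi^{s,q}(\boldsymbol{i})$ one obtains the key identity
\begin{equation*}
\log \frac{\psi^{s,q}(\boldsymbol{i})}{\pi_k^A(\boldsymbol{i})} \;=\; (s-\gamma_A)\log d(\boldsymbol{i}) \;+\; (\tau_1+\tau_2-s)\bigl(\log d(\boldsymbol{i}) - \log c(\boldsymbol{i})\bigr)_+.
\end{equation*}
Writing $\Psi_k^{s,q} = \mathbb{E}_{\pi_k^A}[\psi^{s,q}/\pi_k^A]$ and applying Jensen's inequality gives
\begin{equation*}
\tfrac{1}{k}\log \Psi_k^{s,q} \;\geq\; (\gamma_A-s)\beta_A \;+\; (\tau_1+\tau_2-s)\cdot\tfrac{1}{k}\mathbb{E}_{\pi_k^A}\bigl[(\log d - \log c)_+\bigr],
\end{equation*}
where $\alpha_A := -\sum_i \pi^A(i)\log c_i>0$ and $\beta_A := -\sum_i \pi^A(i)\log d_i>0$; here I have used the product structure of $\pi_k^A$ to evaluate $\tfrac{1}{k}\mathbb{E}_{\pi_k^A}[\log d]$ as $-\beta_A$ exactly.

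The main obstacle is controlling the remaining expectation. Under $\pi_k^A$ the quantity $\log d(\boldsymbol{i}) - \log c(\boldsymbol{i})$ is a sum of $k$ independent, identically distributed, uniformly bounded random variables with common mean $\alpha_A - \beta_A$. The strong law of large numbers combined with dominated convergence then yields $\tfrac{1}{k}\mathbb{E}_{\pi_k^A}[(\log d - \log c)_+] \to (\alpha_A - \beta_A)_+$; the boundary case $\alpha_A = \beta_A$ needs a brief argument but is handled by the uniform boundedness of the increments. Passing to the limit gives $\log P(s,q) \geq (\gamma_A-s)\beta_A + (\tau_1+\tau_2-s)(\alpha_A - \beta_A)_+$, and a short case-by-case check, using the identity that the ratio appearing in the statement of $L_A$ is precisely $\rho_A = 1 - \beta_A/\alpha_A = (\alpha_A-\beta_A)/\alpha_A$, shows that the right-hand side is non-negative exactly when $s \leq L_A(q)$, with the $(\cdot)^+$ in the definition of $L_A$ accounting precisely for the positive-part in the law-of-large-numbers limit. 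This yields $P(L_A(q), q) \geq 1$ and hence $\gamma(q) \geq L_A(q)$. For the final assertion, note that $\rho_A < 1$ is equivalent to $\beta_A > 0$, which holds since $d_i \in (0,1)$ and $\pi^A$ puts positive mass on every $i$; hence in the regime $\gamma_A(q) > \tau_1(q) + \tau_2(q)$ one has $L_A(q) - (\tau_1(q)+\tau_2(q)) = (\gamma_A(q)-\tau_1(q)-\tau_2(q))(1-\rho_A) > 0$ strictly whenever the positive part is non-zero, giving the advertised strict improvement on the bound $\gamma(q) \geq \tau_1(q) + \tau_2(q)$.
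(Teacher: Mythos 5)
Your proof is correct but takes a genuinely different route from the paper's. The paper's argument is a method-of-types computation: for an arbitrary probability vector $\theta$ it restricts $\Psi_{n(k)}^{s,q}$ to the type class $\mathcal{J}_k\subset\mathcal{I}^{n(k)}$ of words with letter frequencies $\lfloor\theta_i k\rfloor$ (on which $c(\boldsymbol{i})$, $d(\boldsymbol{i})$ and hence $\pi_{\boldsymbol{i}}$ are constant), estimates $|\mathcal{J}_k|$ via Stirling's formula to produce a relative-entropy criterion on $\theta$, splits into three cases according to whether $\prod_i c_i^{\theta_i}\gtrless\prod_i d_i^{\theta_i}$, and only at the end sets $\theta=\pi^A$. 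You instead tilt by $\pi^A$ from the outset, apply Jensen's inequality to $\log\Psi_k^{s,q}$, fold the paper's three-way sign split into a positive part through the identity $\log(\psi^{s,q}/\pi_k^A)=(s-\gamma_A)\log d+(\tau_1+\tau_2-s)(\log d-\log c)_+$, and evaluate $\tfrac1k\mathbb{E}_{\pi_k^A}[(\log d-\log c)_+]$ by the strong law of large numbers plus dominated convergence. Both are in essence large-deviation lower bounds with the same optimal tilt, but your route dispenses with Stirling's formula and the type-class combinatorics entirely and is arguably cleaner; the paper's version keeps $\theta$ free until the last step, which gives slightly more flexibility, though it too ultimately evaluates at $\pi^A$. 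One small imprecision worth flagging: the right-hand side of your Jensen estimate is non-negative for all $s\leq L_A(q)$, but not \emph{only} for such $s$ (when $\rho_A<0$ and $\gamma_A<\tau_1+\tau_2$ it is non-negative for all $s\leq\gamma_A$, which exceeds $L_A$), so ``exactly when'' should read ``whenever''; this only means your argument can sometimes give a bound strictly stronger than $L_A$, and the stated conclusion $\gamma(q)\geq L_A(q)$ is unaffected.
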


\begin{proof}
We prove that $\gamma(q)\geq L_A(q)$.   The inequality $\gamma(q)\geq L_B(q)$ follows by an analogous argument which we omit.  Let $\lbrace\theta_{i}\rbrace_{i\in\mathcal{I}}$ denote an arbitrary probability vector, and for each $k\in\mathbb{N}$, define a number $n(k)\in\mathbb{N}$ by
\[
n(k)=\sum_{i\in\mathcal{I}}\lfloor{\theta_{i}k}\rfloor.
\end{equation*}
Note that $k-|\mathcal{I}|\leq n(k)\leq k$. We consider the $n(k)$th iteration of $\mathcal{I}$ and define
\[
\mathcal{J}_{k}=\left\lbrace\boldsymbol{j}=(j_{1},\dots,j_{n(k)})\in\mathcal{I}^{n(k)}:\#\lbrace m:j_{m}=i\rbrace=\lfloor\theta_{i}k\rfloor\textnormal{ for each}\ i\in\mathcal{I}\right\rbrace,
\]
noting that
\[
|\mathcal{J}_{k}|=\frac{n(k)!}{\prod_{i\in\mathcal{I}}\lfloor\theta_{i}k\rfloor !}.
\]
We also define numbers $c$, $d$ and $p$ (for which we suppress the dependency on $k$) by 
\[
c=\prod_{i\in\mathcal{I}}c_{i}^{\lfloor\theta_{i}k\rfloor}, \qquad d=\prod_{i\in\mathcal{I}}d_{i}^{\lfloor\theta_{i}k\rfloor}, \qquad p=\prod_{i\in\mathcal{I}}p_{i}^{\lfloor\theta_{i}k\rfloor}.
\]
First assume that $\prod_{i\in\mathcal{I}}c_{i}^{\theta_i}>\prod_{i\in\mathcal{I}}d_{i}^{\theta_i}$. In particular this assumption  implies that $c>d$ for $k$ sufficiently large. Indeed
\[
c=\prod_{i\in\mathcal{I}}c_{i}^{\lfloor\theta_{i}k\rfloor} \geq \left(\prod_{i\in\mathcal{I}}c_{i}^{\theta_{i}} \right)^k
\]
and
\[
d=\prod_{i\in\mathcal{I}}d_{i}^{\lfloor\theta_{i}k\rfloor} \leq \left(\prod_{i\in\mathcal{I}}d_{i}^{\theta_{i}} \right)^k \left(\prod_{i\in\mathcal{I}}d_{i} \right)^{-1}
\]
and therefore $c>d$ for all
\[
k > \frac{-\log \left(\prod_{i\in\mathcal{I}}d_{i}\right)}{\log\left( \left(\prod_{i\in\mathcal{I}}c_{i}^{\theta_i} \right) / \left( \prod_{i\in\mathcal{I}}d_{i}^{\theta_i} \right)\right)}.
\]

Therefore, for all sufficiently large $k$,  $\boldsymbol{i}\in\mathcal{J}_{k}$ and $s\in\mathbb{R}$, 
\begin{equation}\label{q-mod}
\psi^{s, q}(\boldsymbol{i})=p^q\ c^{\tau_{1}(q)}\ d^{s-\tau_{1}(q)}.
\end{equation}

By definition of $c,d$ and $p$ we may write this as 
\[
\psi^{s, q}(\boldsymbol{i})=\prod_{i\in\mathcal{I}}\left(p_{i}^q \ c_{i}^{\tau_{1}(q)}\ d_{i}^{s-\tau_{1}(q)}\right)^{\lfloor\theta_{i}k\rfloor}.
\]
We now introduce a form of Stirling's approximation which states that for $n\in\mathbb{N}$ sufficiently large 
\[
n\log n-n\leq\log n!\leq n\log n -n +\log n.
\]
Using this as well as (\ref{q-mod}) we find that for $k$ sufficiently large
\begin{align*}
\log\left(\Psi_{n(k)}^{s, q}\right) & \geq \log \left(\sum_{\boldsymbol{i}\in\mathcal{J}_{k}}\psi^{s, q}(\boldsymbol{i}) \right) \\
&=\log\left(|\mathcal{J}_k|\prod_{i\in\mathcal{I}}\left(p_{i}^q \ c_{i}^{\tau_{1}(q)}\ d_{i}^{s-\tau_{1}(q)}\right)^{\lfloor\theta_{i}k\rfloor}\right)\\
&=\left(\log n(k)!-\sum_{i\in\mathcal{I}}\log\lfloor\theta_{i}k\rfloor!+\sum_{i\in\mathcal{I}}\lfloor\theta_{i}k\rfloor\log\left(p_{i}^q \ c_{i}^{\tau_{1}(q)}\ d_{i}^{s-\tau_{1}(q)}\right)\right)\\
&\geq \bigg(n(k)\log n(k) -n(k)- \sum_{i\in\mathcal{I}}\lfloor\theta_{i}k\rfloor\log\lfloor\theta_{i}k\rfloor+\sum_{i\in\mathcal{I}}\lfloor\theta_{i}k\rfloor\\ 
&\qquad\qquad-\sum_{i\in\mathcal{I}}\log\lfloor\theta_{i}k\rfloor +\sum_{i\in\mathcal{I}}\lfloor\theta_{i}k\rfloor\log\left(p_{i}^q \ c_{i}^{\tau_{1}(q)}\ d_{i}^{s-\tau_{1}(q)}\right)\bigg)
\end{align*}
where the last line follows from the above version of Stirling's formula. Continuing to bound and introducing and exponent of $1/n(k)$ we get
\begin{align*}
\log\left(\Psi_{n(k)}^{s, q}\right)^{1/n(k)} & \geq\frac{1}{n(k)} \bigg(n(k)\log n(k)- \sum_{i\in\mathcal{I}}\lfloor\theta_{i}k\rfloor\log k- \sum_{i\in\mathcal{I}}\lfloor\theta_{i}k\rfloor\log\theta_{i} \\ 
&\qquad\qquad-\sum_{i\in\mathcal{I}}\log\lfloor\theta_{i}k\rfloor +\sum_{i\in\mathcal{I}}\lfloor\theta_{i}k\rfloor\log\left(p_{i}^q \ c_{i}^{\tau_{1}(q)}\ d_{i}^{s-\tau_{1}(q)}\right)\bigg)\\
&\geq\frac{1}{n(k)} \Bigg(n(k)\log n(k)-n(k)\log k-\sum_{i\in\mathcal{I}}\log\lfloor\theta_{i}k\rfloor\\
&\qquad\qquad\qquad\qquad +\sum_{i\in\mathcal{I}}\lfloor\theta_{i}k\rfloor \log\left( \frac{p_{i}^q \ c_{i}^{\tau_{1}(q)}\ d_{i}^{s-\tau_{1}(q)}}{\theta_{i}}\right)\Bigg)\\
&\geq \log\left(\frac{k-|\mathcal{I}|}{k}\right)-\frac{1}{k-|\mathcal{I}|}\sum_{i\in\mathcal{I}}\log\theta_{i}k\\ 
&\qquad\qquad\qquad\qquad +\sum_{i\in\mathcal{I}}\theta_{i} \log\left( \frac{p_{i}^q \ c_{i}^{\tau_{1}(q)}\ d_{i}^{s-\tau_{1}(q)}}{\theta_{i}}\right)
\end{align*}



where the last line uses the fact that $k-|\mathcal{I}|\leq n(k)$. Taking the limit as $k\rightarrow\infty$ the right hand side tends to
\[
\sum_{i\in\mathcal{I}}\theta_{i} \log\left( \frac{p_{i}^q \ c_{i}^{\tau_{1}(q)}\ d_{i}^{s-\tau_{1}(q)}}{\theta_{i}}\right).
\]
If this is non-negative then
\[
P(s, q)=\lim_{k\rightarrow\infty}\left(\Psi_{n(k)}^{s, q}\right)^{1/n(k)}\geq 1
\]
and therefore $\gamma(q)\geq s$.

Second, assume that $\prod_{i\in\mathcal{I}}c_{i}^{\theta_i}<\prod_{i\in\mathcal{I}}d_{i}^{\theta_i}$.  In this case, a completely analogous argument proves that  if
\[
\sum_{i\in\mathcal{I}}\theta_{i} \log\left( \frac{p_{i}^q \ d_{i}^{\tau_{2}(q)}\ c_{i}^{s-\tau_{2}(q)}}{\theta_{i}}\right)\geq 0 
\]
then $P(s, q)\geq 1$ and so $\gamma(q)\geq s$.

Finally, if $\prod_{i\in\mathcal{I}}c_{i}^{\theta_i}=\prod_{i\in\mathcal{I}}d_{i}^{\theta_i}$ then we cannot guarantee that $c>d$ or $d>c$ for all $k$ sufficiently large. We can however conclude that we must have either $c\geq d$ or $d\geq c$ (or both) for infinitely many $k$, so by choosing an appropriate subsequence we can reduce to one of the above two cases. Since we do not know which case we are in ($c\geq d$ or $d\geq c$) we must require that both of the above summation conditions hold. Putting the above three cases together we have therefore shown that
\[
\begin{split}
\gamma(q)\geq \sup \Bigg\{ s: \textnormal{there exists a probability vector}\ \lbrace\theta_{i}\rbrace_{i\in\mathcal{I}} \ \textnormal{such that either}\\ (1) \ \prod_{i\in\mathcal{I}}c_{i}^{\theta_i}>\prod_{i\in\mathcal{I}}d_{i}^{\theta_i} \ \textnormal{and}\sum_{i\in\mathcal{I}}\theta_{i} \log\left( \frac{p_{i}^q \ c_{i}^{\tau_{1}(q)}\ d_{i}^{s-\tau_{1}(q)}}{\theta_{i}}\right)\geq 0\\ \textnormal{or}\ (2) \ \prod_{i\in\mathcal{I}}c_{i}^{\theta_i}<\prod_{i\in\mathcal{I}}d_{i}^{\theta_i} \ \textnormal{and}\sum_{i\in\mathcal{I}}\theta_{i} \log\left( \frac{p_{i}^q \ d_{i}^{\tau_{2}(q)}\ c_{i}^{s-\tau_{2}(q)}}{\theta_{i}}\right)\geq 0 \\
\textnormal {or}\ (3) \ \prod_{i\in\mathcal{I}}c_{i}^{\theta_i}=\prod_{i\in\mathcal{I}}d_{i}^{\theta_i}\ \textnormal{and both}\sum_{i\in\mathcal{I}}\theta_{i} \log\left( \frac{p_{i}^q \ c_{i}^{\tau_{1}(q)}\ d_{i}^{s-\tau_{1}(q)}}{\theta_{i}}\right)\geq 0\\ \textnormal{and}\sum_{i\in\mathcal{I}}\theta_{i} \log\left( \frac{p_{i}^q \ d_{i}^{\tau_{2}(q)}\ c_{i}^{s-\tau_{2}(q)}}{\theta_{i}}\right)\geq 0&\Bigg\}.
\end{split}
\]
In the above we have the freedom to choose a probability vector $\{\theta_{i}\}_{i\in\mathcal{I}}$. A natural choice here, suggested by considering Lagrange multipliers, is to take
\[
\{\theta_{i}\}_{i\in\mathcal{I}}=\left\{ p_{i}^q \ c_{i}^{\tau_{1}(q)} \ d_{i}^{\gamma_{A}(q)-\tau_{1}(q)}\right\}_{i\in\mathcal{I}}
\]
(note that this is indeed a probability vector by definition of $\gamma_A$). We now let $s=\gamma_{A}(q)-\varepsilon$ for $\varepsilon\geq 0$. We want to see how small we can make $\varepsilon$ (ideally we want $\varepsilon=0$) such that the two conditions hold simultaneously. The first holds trivially, since
\[
\sum_{i\in\mathcal{I}}p_{i}^q \ c_{i}^{\tau_{1}(q)} \ d_{i}^{\gamma_{A}(q)-\tau_{1}(q)} \log\left( \frac{p_{i}^q \ c_{i}^{\tau_{1}(q)}\ d_{i}^{\gamma_{A}(q)-\varepsilon-\tau_{1}(q)}}{p_{i}^q \ c_{i}^{\tau_{1}(q)} \ d_{i}^{\gamma_{A}(q)-\tau_{1}(q)}}\right)=\sum_{i\in\mathcal{I}}p_{i}^q \ c_{i}^{\tau_{1}(q)} \ d_{i}^{\gamma_{A}(q)-\tau_{1}(q)} \log (d_{i}^{-\varepsilon})\geq 0.
\]
For the second to hold, we require
\[
\sum_{i\in\mathcal{I}}p_{i}^q \ c_{i}^{\tau_{1}(q)} \ d_{i}^{\gamma_{A}(q)-\tau_{1}(q)} \log\left( \frac{p_{i}^q \ d_{i}^{\tau_{2}(q)}\ c_{i}^{\gamma_{A}(q)-\varepsilon-\tau_{2}(q)}}{p_{i}^q \ c_{i}^{\tau_{1}(q)} \ d_{i}^{\gamma_{A}(q)-\tau_{1}(q)}}\right)\geq 0.
\]
Rearranging this, we see that this is equivalent to requiring
\begin{equation}\label{Epsilion Condition}
\varepsilon\geq \Big(\gamma_{A}(q)-\tau_{1}(q)-\tau_{2}(q)\Big)\frac{\sum_{i\in\mathcal{I}} p_{i}^q \ c_{i}^{\tau_{1}(q)} \ d_{i}^{\gamma_{A}(q)-\tau_{1}(q)}\log (c_{i}/d_{i})}{\sum_{i\in\mathcal{I}} p_{i}^q \ c_{i}^{\tau_{1}(q)} \ d_{i}^{\gamma_{A}(q)-\tau_{1}(q)}\log (c_{i})}.
\end{equation}
We  note that when Fraser's original condition from Theorem 2.10 in \cite{Fraser} holds, namely if 
\[
\sum_{i\in\mathcal{I}} p_{i}^q \ c_{i}^{\tau_{1}(q)} \ d_{i}^{\gamma_{A}(q)-\tau_{1}(q)}\log (c_{i}/d_{i})\geq 0,
\]
then right hand side of (\ref{Epsilion Condition}) is negative so we may take $\varepsilon=0$. Otherwise we use the bound for $\varepsilon$ given in (\ref{Epsilion Condition}). Putting these two cases together therefore gives us that
\[
\gamma(q)\geq \gamma_{A}(q)-\left(\Big(\gamma_{A}(q)-\tau_{1}(q)-\tau_{2}(q)\Big)\frac{\sum_{i\in\mathcal{I}} p_{i}^q \ c_{i}^{\tau_{1}(q)} \ d_{i}^{\gamma_{A}(q)-\tau_{1}(q)}\log (c_{i}/d_{i})}{\sum_{i\in\mathcal{I}} p_{i}^q \ c_{i}^{\tau_{1}(q)} \ d_{i}^{\gamma_{A}(q)-\tau_{1}(q)}\log (c_{i})}\right)^+. 
\]
Finally we note that
\[
\frac{\sum_{i\in\mathcal{I}} p_{i}^q \ c_{i}^{\tau_{1}(q)} \ d_{i}^{\gamma_{A}(q)-\tau_{1}(q)}\log (c_{i}/d_{i})}{\sum_{i\in\mathcal{I}} p_{i}^q \ c_{i}^{\tau_{1}(q)} \ d_{i}^{\gamma_{A}(q)-\tau_{1}(q)}\log (c_{i})}= 1-\frac{\sum_{i\in\mathcal{I}} p_{i}^q \ c_{i}^{\tau_{1}(q)} \ d_{i}^{\gamma_{A}(q)-\tau_{1}(q)}\log (d_{i})}{\sum_{i\in\mathcal{I}} p_{i}^q \ c_{i}^{\tau_{1}(q)} \ d_{i}^{\gamma_{A}(q)-\tau_{1}(q)}\log (c_{i})}< 1
\]
so our lower bound is indeed an improvement on 
\[
\gamma(q)\geq \tau_{1}(q)+\tau_{2}(q)
\]
in the case when $\gamma(q) \leq \min\{\gamma_A(q), \gamma_B(q)\}$.
\end{proof}

\subsection{An example}

Here we present an example of a diagonal system satisfying the assumptions of Theorem \ref{counterexample} where we take $c=3/4$ and $d=1/4$. In this setting we know from Theorem \ref{counterexample} that $\tau_{\mu}(q)=\gamma(q)$ is not given by the maximum or minimum of $\gamma_A(q)$ and $\gamma_B(q)$ for $q>1$. It is therefore natural to consider bounds for the $L^q$-spectrum.

Let $q>1$. Focusing on upper bounds Theorem \ref{Fraser1} implies that, for $q>1$, $\gamma_A(q) = \gamma_B(q) = \tau_1(q)=\tau_2(q) = s<0 $, where $s$ is the solution of
\[
2^{-q}c^s+2^{-q}d^s = 1,
\]
and
\[
\gamma(q) \leq s - \frac{2 \log \left(\frac{2 (d/c)^{s/2}}{(d/c)^s+1}\right)}{\log(cd)}.
\] 
Concerning lower bounds,    Theorem \ref{Lower Bound Theorem} implies that 
\[
\gamma(q) \geq \max \{L_A(q), L_B(q)\} =s \left(2 - \frac{c^s\log(d) +d^s\log(c) }{c^s\log(c) +d^s\log(d)} \right).
\]
We also note a couple of trivial lower bounds.  Since $\gamma(0) = 1$ (the box dimension of the support of $\mu$), $\gamma(1)=0$, and $\gamma$ is necessarily convex, it follows that  $1-q$ is a   lower bound for $\tau_{\mu}(q)$. We also know that $\tau_1(q)+\tau_2(q)$ is  a lower bound for $\tau_{\mu}(q)$, see a remark following \cite[Question 2.14]{Fraser}.  Figure \ref{frasercasefig} shows  a plot of these bounds for $q\in [1, 20]$. We  see that our new lower bound, $\max \{L_A(q), L_B(q)\}$ is a strict improvement on the lower bound of $1-q$ outside of the the range $(1.7, 9.3)$.

\begin{figure}[H]
  \centering
\includegraphics[scale=0.7]{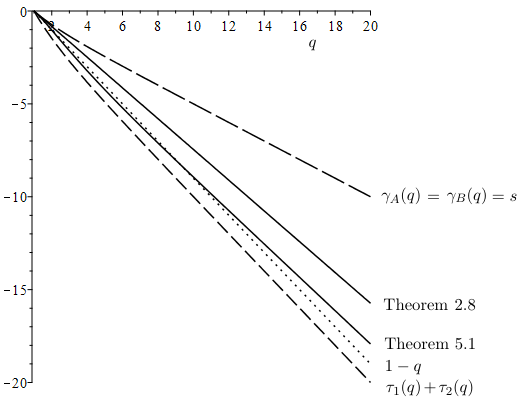}
\caption{Graph of our new upper and lower  bounds for the $L^q$-spectrum (solid lines), labelled by the Theorem they come from.  For reference we also show graphs of the previously known upper bound $\min\lbrace\gamma_{A}(q), \gamma_{B}(q)\rbrace$ (long dash) and the previously known lower bound $\tau_1(q)+\tau_2(q)$ (short dash), as well as the lower bound  $1-q$, which is specific to this setting (dots).}
\label{frasercasefig}
\end{figure}

\begin{figure}[H]
  \centering
\includegraphics[width=0.8\textwidth]{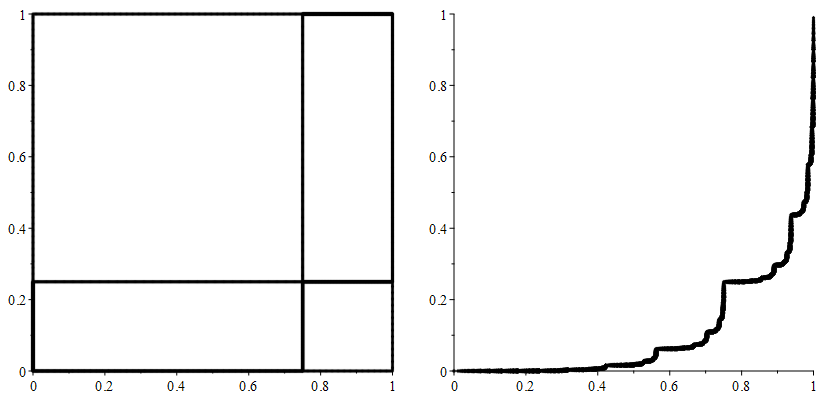}
\caption{Left: images of the unit square under the two maps used above. Right: the associated self-affine set.}
\label{frasercasefig2}
\end{figure}

\section{Generalised $q$-dimensions in the generic setting}

In \cite{Miao} Falconer and Miao considered self-affine sets and measures generated by IFS consisting of upper-triangular matrices.  This paper was mainly concerned with dimensions of self-affine \emph{sets}, but towards the end of the paper they stated a closed form expression for the \textit{generalised $q$-dimensions} in the  measure setting (here, generalised $q$-dimensions simply refer to the $L^q$-spectrum normalised by $1-q$). We show that in fact their formula does \emph{not} always hold when $q>1$. We begin by recalling some definitions and notation from \cite{Miao}.\\

\begin{defn}
Suppose $T$ is an $n\times n$ contracting matrix. Then for $0\leq s\leq n$ the \textit{singular value function} $\phi^s(T)$ is defined to be
\[
\phi^s(T)=\alpha_1\alpha_2\cdots\alpha_{m-1}\alpha_m^{s-m+1}
\]
where $\alpha_1\geq\alpha_2\geq\cdots\geq\alpha_n$ are the singular values of $T$ and where $m$ is the unique integer such that $m-1<s\leq m$. For $s\geq n$ we define $\phi^s(T)$ to be
\[
\phi^s(T)=(\alpha_1\alpha_2\cdots\alpha_{n})^{s/n}.
\]
\end{defn}

For a finite Borel measure $\mu$ on $\mathbb{R}^n$  and $q\in\mathbb{R}$, $q\neq 1$, Falconer and Miao discuss the \textit{generalised q-dimensions} of $\mu$, denoted $D_q(\mu)$. This is simply defined to be the $L^q$-spectra of $\mu$ normalised by $1-q$, that is
\[
D_q(\mu)=\frac{\tau_{\mu}(q)}{1-q}
\]
provided the appropriate limits exist. In order to calculate the generalised $q$-dimensions of self-affine measures $\mu$ associated with contracting upper triangular matrices $T_1,\dots, T_N$ and probabilities $p_1,\dots, p_N$ Falconer and Miao studied the quantity $d_q(T_1,\dots, T_N, \mu)$ defined, for each $q\geq 0$ ($q \neq 1$) to be the unique $t$ satisfying
\[
\lim_{k\rightarrow\infty}\left(\sum_{\boldsymbol{i}\in\mathcal{I}^k}\phi^t(T_{\boldsymbol{i}})^{1-q}p_{\boldsymbol{i}}^q\right)^{1/k}=1.
\]
This approach was introduced in \cite{Falconerlq1} where it was shown that for $q \in (1,2)$ the generalised $q$-dimensions of a self-affine measure is generically given by $d_q(T_1,\dots, T_N, \mu)$ in an appropriate sense.  See \cite{Falconerlq2} where further results along these lines were obtained for almost self-affine measures.  It is therefore of great interest to provide closed form expressions for $d_q(T_1,\dots, T_N, \mu)$ or at least to be able to estimate it effectively. We state the result using our notation and only in the planar case, although the higher dimensional case was also considered.

Let $T_1,\dots, T_N$ denote a collection of contracting non-singular $2\times 2$ upper triangular matrices and let $c_i, d_i$ denote the diagonal entries of the $i$th matrix. Define a function $P_0:[0,2] \times [0,1) \cup(1, \infty)\rightarrow [0, \infty)$ by
\[
P_0(t,q)=\begin{cases} 
\max\left\{ \sum_{i=1}^N p_i^q\ c_i^{t(1-q)},  \sum_{i=1}^N p_i^q\ d_i^{t(1-q)}\right\},  \qquad 0\leq t<1\\
\max\left\{ \sum_{i=1}^N p_i^q\left(c_i^{2-t}(c_i \ d_i)^{t-1}\right)^{1-q},  \sum_{i=1}^N p_i^q\left(d_i^{2-t}(c_i\ d_i)^{t-1}\right)^{1-q}\right\}, \qquad  1\leq t\leq 2\\
\end{cases}
\]
and, for each $q \in [0,1) \cup (1, \infty)$, let $u_0(q)$  be defined by $P_0(u_0(q),q)=1$, provided a solution exists and otherwise simply let $u_0(q)=2$. 

\begin{thm}$\cite[\textnormal{Theorem 4.1}]{Miao}$\label{Miao's Theorem}
Let $\mu$ be a planar self-affine measure generated by an IFS of upper triangular matrices as above.   Then for $q \in [0,1)$
\[
d_q(T_1,\dots, T_N,\mu)=u_0(q).
\]
\end{thm}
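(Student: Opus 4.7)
The plan is to reduce the $k$th-root limit defining $d_q(T_1,\dots,T_N,\mu)$ to a limit whose summand factorises across coordinates, at which point it can be read off directly as $P_0(t,q)$. The guiding observation is that because all $T_i$ are upper triangular, each product $T_{\boldsymbol{i}}=T_{i_1}\cdots T_{i_k}$ is upper triangular with diagonal entries the multiplicative products $c_{\boldsymbol{i}}=\prod_l c_{i_l}$ and $d_{\boldsymbol{i}}=\prod_l d_{i_l}$, so $|\det T_{\boldsymbol{i}}|=\alpha_1(T_{\boldsymbol{i}})\alpha_2(T_{\boldsymbol{i}})=|c_{\boldsymbol{i}}d_{\boldsymbol{i}}|$ is already multiplicative in $\boldsymbol{i}$. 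The whole argument rests on showing that $\alpha_1(T_{\boldsymbol{i}})$ is also essentially multiplicative via the uniform comparison $\alpha_1(T_{\boldsymbol{i}})\asymp\max\{|c_{\boldsymbol{i}}|,|d_{\boldsymbol{i}}|\}$.

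Assuming this comparison, $\phi^t(T_{\boldsymbol{i}})^{1-q}$ becomes, up to constants independent of $\boldsymbol{i}$, a maximum of two monomials in $c_{\boldsymbol{i}},d_{\boldsymbol{i}}$: namely $\max\{c_{\boldsymbol{i}}^{t(1-q)},d_{\boldsymbol{i}}^{t(1-q)}\}$ for $0\leq t<1$, and $\max\{(c_{\boldsymbol{i}}^{2-t}(c_{\boldsymbol{i}}d_{\boldsymbol{i}})^{t-1})^{1-q},(d_{\boldsymbol{i}}^{2-t}(c_{\boldsymbol{i}}d_{\boldsymbol{i}})^{t-1})^{1-q}\}$ for $1\leq t\leq 2$. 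Since $q\in[0,1)$, the exponent $1-q$ is positive and the comparability is preserved. Applying the sandwich $\max\{a,b\}\leq a+b\leq 2\max\{a,b\}$ pinches
\[
\sum_{\boldsymbol{i}\in\mathcal{I}^k}\phi^t(T_{\boldsymbol{i}})^{1-q}p_{\boldsymbol{i}}^q
\]
between two exponential sums of the form $\sum_{\boldsymbol{i}\in\mathcal{I}^k}p_{\boldsymbol{i}}^q c_{\boldsymbol{i}}^{\alpha}d_{\boldsymbol{i}}^{\beta}$, each of which factorises as $\bigl(\sum_i p_i^q c_i^{\alpha}d_i^{\beta}\bigr)^k$ by multiplicativity. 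Taking $k$th roots absorbs the uniform constants and collapses the sum of two $k$th powers into their maximum, yielding
\[
\lim_{k\to\infty}\Bigl(\sum_{\boldsymbol{i}\in\mathcal{I}^k}\phi^t(T_{\boldsymbol{i}})^{1-q}p_{\boldsymbol{i}}^q\Bigr)^{1/k}=P_0(t,q),
\]
from which $d_q(T_1,\dots,T_N,\mu)=u_0(q)$ follows directly from the definitions of $d_q$ and $u_0$.

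The main obstacle is the uniform upper bound $\alpha_1(T_{\boldsymbol{i}})\leq K\max\{|c_{\boldsymbol{i}}|,|d_{\boldsymbol{i}}|\}$: the off-diagonal entry $b_{\boldsymbol{i}}$ of $T_{\boldsymbol{i}}$ obeys the recursion $b_{i\boldsymbol{j}}=c_ib_{\boldsymbol{j}}+b_id_{\boldsymbol{j}}$ and unfolds to the sum $\sum_{l=1}^k c_{i_1}\cdots c_{i_{l-1}}b_{i_l}d_{i_{l+1}}\cdots d_{i_k}$ of $k$ terms that could, a priori, reinforce rather than cancel. One has to use the strict contraction of each $T_i$ to extract from each summand a geometric factor decaying away from its peak index $l$, and then sum the resulting tail uniformly in $k$ and $\boldsymbol{i}$. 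The matching lower bound $\alpha_1(T_{\boldsymbol{i}})\geq\max\{|c_{\boldsymbol{i}}|,|d_{\boldsymbol{i}}|\}$ is immediate by testing $T_{\boldsymbol{i}}$ and $T_{\boldsymbol{i}}^*$ against the standard basis, and the analogous two-sided estimate $\alpha_2(T_{\boldsymbol{i}})\asymp\min\{|c_{\boldsymbol{i}}|,|d_{\boldsymbol{i}}|\}$ follows from $\alpha_1\alpha_2=|\det T_{\boldsymbol{i}}|$. Minor bookkeeping points are checking consistency of the two branches of the definitions of $\phi^t$ and $P_0$ at $t=1$, and handling the edge case in which no root of $P_0(\cdot,q)=1$ lies in $[0,2]$, which is accounted for by the convention $u_0(q)=2$.
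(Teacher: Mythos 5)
Before anything else, a structural point: the paper under discussion does not prove Theorem~\ref{Miao's Theorem}. It is quoted verbatim from Falconer--Miao \cite{Miao}, with the proof for $q\in[0,1)$ appearing in Miao's thesis \cite{Miao1}. There is therefore no in-paper proof to compare against, but your argument can still be assessed on its own terms, and it contains a genuine gap.

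The whole reduction rests on the uniform \emph{term-by-term} comparison $\alpha_1(T_{\boldsymbol{i}})\le K\max\{|c_{\boldsymbol{i}}|,|d_{\boldsymbol{i}}|\}$ with $K$ independent of the word $\boldsymbol{i}$ and of its length $k$. This is false. Take
\[
T_1=\begin{pmatrix}0.4 & 0.1\\ 0 & 0.1\end{pmatrix},\qquad T_2=\begin{pmatrix}0.1 & 0.1\\ 0 & 0.4\end{pmatrix},
\]
both contracting, and let $\boldsymbol{i}$ be $m$ copies of the letter $1$ followed by $m$ copies of $2$, so $k=2m$. Then $c_{\boldsymbol{i}}=d_{\boldsymbol{i}}=(0.04)^m$, but the $(1,2)$ entry of $T_{\boldsymbol{i}}=T_1^mT_2^m$ is comparable to $(0.4)^{2m-1}$, so $\alpha_1(T_{\boldsymbol{i}})/\max\{|c_{\boldsymbol{i}}|,|d_{\boldsymbol{i}}|\}$ grows like $4^m$. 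Your plan to extract a geometric factor decaying away from the peak index $l$ of the unfolded sum $b_{\boldsymbol{i}}=\sum_l c_{i_1}\cdots c_{i_{l-1}}b_{i_l}d_{i_{l+1}}\cdots d_{i_k}$ only bounds $b_{\boldsymbol{i}}$ by a constant times its largest term, and for words that switch once from a $c$-dominant block to a $d$-dominant block that largest term is itself exponentially bigger than $\max\{|c_{\boldsymbol{i}}|,|d_{\boldsymbol{i}}|\}$. The correct argument works at the level of the full sum rather than word by word: bound $\alpha_1(T_{\boldsymbol{i}})^{s}$ ($s=t(1-q)>0$) by a constant times $|c_{\boldsymbol{i}}|^{s}+|d_{\boldsymbol{i}}|^{s}+|b_{\boldsymbol{i}}|^{s}$, commute the $s$th power past the $l$-sum at a cost of at most $k^{(s-1)^+}$, and then \emph{interchange} the sums over $\boldsymbol{i}$ and over $l$, using multiplicativity to get
\[
\sum_{\boldsymbol{i}\in\mathcal{I}^k}\bigl|c_{i_1}\cdots c_{i_{l-1}}b_{i_l}d_{i_{l+1}}\cdots d_{i_k}\bigr|^s p_{\boldsymbol{i}}^q=\Bigl(\sum_i|c_i|^sp_i^q\Bigr)^{l-1}\Bigl(\sum_i|b_i|^sp_i^q\Bigr)\Bigl(\sum_i|d_i|^sp_i^q\Bigr)^{k-l}.
\]
Summing over $l$ produces a factor $k$ and a $(k-1)$st power of $\max\bigl\{\sum_i|c_i|^sp_i^q,\sum_i|d_i|^sp_i^q\bigr\}$, and both the factor $k$ and the constants wash out on taking $k$th roots, giving $P_0(t,q)$. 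The badly behaved words are thus absorbed by the combinatorics of the sum, not excluded individually; without this exchange-of-summation step your argument does not close.
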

In the paper \cite{Miao}, this result was suggested to hold for all $q\geq 0$ ($q\neq 1$). The result appeared again in Miao's PhD thesis \cite[Theorem 3.11]{Miao1} in which he noted that, in fact, he could only establish the result for $q \in [0,1)$.  Miao conjectured that the result should still hold for $q>1$, see discussion leading up to \cite[Theorem 3.11]{Miao1}.    Our main result in this section, which is essentially an analogue of Theorem \ref{counterexample} adapted to this situation, proves that Theorem \ref{Miao's Theorem}, does \emph{not} hold for $q>1$ in general. 

We note that the approach in \cite{Miao,Miao1}  does provide a lower bound for $d_q(T_1,\dots, T_N,\mu)$ for $q>1$, that is, for all $q>1$,
\[
d_q(T_1,\dots, T_N,\mu)\geq u_0(q).
\]

\subsection{A family of counterexamples relating to generalised $q$-dimensions}

Before considering the range $q>1$ we note that a better lower bound than $u_0(q)$ is available simply by changing the maximum to a minimum in the definition of $P_0$, which is natural for $q>1$.  We define  $P_0^*:[0,2] \times [0,1) \cup (1, \infty) \rightarrow [0, \infty)$ by $P_0^*(t,q) = P_0(t,q)$ for $q \in [0,1)$ and for $q>1$ by
\[
P^*_0(t,q)=\begin{cases} 
\min\left\{ \sum_{i=1}^N p_i^q\ c_i^{t(1-q)},  \sum_{i=1}^N p_i^q\ d_i^{t(1-q)}\right\},  \qquad 0\leq t<1\\
\min\left\{ \sum_{i=1}^N p_i^q\left(c_i^{2-t}(c_i \ d_i)^{t-1}\right)^{1-q},  \sum_{i=1}^N p_i^q\left(d_i^{2-t}(c_i\ d_i)^{t-1}\right)^{1-q}\right\}, \qquad  1\leq t\leq 2.\\
\end{cases}
\]
Let $u(q)$  be defined by $P_0^*(u(q),q)=1$, provided a solution exists and otherwise simply let $u(q)=2$. Note that $u(q) = u_0(q)$ for $q \in [0,1)$ and $u(q) \geq u_0(q)$  for $q>1$ with strict inequality a possibility.  This inequality comes from the fact that the functions that we are taking the maximum or minimum of are \emph{increasing} in $t$ for $q>1$. We expect that when conjecturing a closed form expression for $d_q(T_1,\dots, T_N,\mu)$ for $q>1$, Miao \cite{Miao1} was thinking of $u(q)$ rather than $u_0(q)$.   \\

\begin{lem}
For all $q\geq 0$ ($q \neq 1$) we  have
\[
d_q(T_1,\dots, T_N,\mu)\geq u(q).
\]
\end{lem}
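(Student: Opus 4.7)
The plan is to bound $\phi^t(T_{\boldsymbol{i}})$ pointwise from below in terms of the diagonal entries $c_{\boldsymbol{i}}=\prod_j c_{i_j}$ and $d_{\boldsymbol{i}}=\prod_j d_{i_j}$, sum over $\boldsymbol{i}\in\mathcal{I}^k$, take $k$-th roots, and use monotonicity in $t$ to conclude $d_q\geq u(q)$.

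Since each $T_i$ is upper triangular, so is $T_{\boldsymbol{i}}$, with diagonal entries $(c_{\boldsymbol{i}},d_{\boldsymbol{i}})$. Writing $\alpha_1\geq\alpha_2>0$ for its singular values, we have $\alpha_1\alpha_2=|\det T_{\boldsymbol{i}}|=c_{\boldsymbol{i}}d_{\boldsymbol{i}}$ and $\alpha_1=\|T_{\boldsymbol{i}}\|\geq\max\{c_{\boldsymbol{i}},d_{\boldsymbol{i}}\}$ (the operator norm dominates any diagonal entry, as seen by applying $T_{\boldsymbol{i}}$ to the standard basis vectors). Hence for $0\leq t\leq 1$ we have $\phi^t(T_{\boldsymbol{i}})=\alpha_1^t\geq c_{\boldsymbol{i}}^t$ and $\phi^t(T_{\boldsymbol{i}})\geq d_{\boldsymbol{i}}^t$, while for $1\leq t\leq 2$, using $\alpha_2=c_{\boldsymbol{i}}d_{\boldsymbol{i}}/\alpha_1$,
\[
\phi^t(T_{\boldsymbol{i}})=\alpha_1^{2-t}(c_{\boldsymbol{i}}d_{\boldsymbol{i}})^{t-1}\geq c_{\boldsymbol{i}}^{2-t}(c_{\boldsymbol{i}}d_{\boldsymbol{i}})^{t-1}\quad\text{and}\quad\phi^t(T_{\boldsymbol{i}})\geq d_{\boldsymbol{i}}^{2-t}(c_{\boldsymbol{i}}d_{\boldsymbol{i}})^{t-1}.
\]

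I then raise both pointwise bounds to the power $1-q$: for $0\leq q<1$ the inequalities are preserved (positive exponent), and for $q>1$ they reverse (negative exponent). Summing each bound separately over $\boldsymbol{i}\in\mathcal{I}^k$, each right-hand side factorises into a $k$-th power of precisely the $c_i$- or $d_i$-sum appearing in the definition of $P_0^*(t,q)$. Taking the maximum of the two factorised bounds when $0\leq q<1$, the minimum when $q>1$, and then $k$-th roots yields
\[
\lim_{k\to\infty}\left(\sum_{\boldsymbol{i}\in\mathcal{I}^k}\phi^t(T_{\boldsymbol{i}})^{1-q}p_{\boldsymbol{i}}^q\right)^{1/k}\geq P_0^*(t,q)\quad\text{for}\quad 0\leq q<1,
\]
and the reversed inequality $\leq P_0^*(t,q)$ for $q>1$.

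To conclude, I use monotonicity in $t$: since $\phi^t(T_{\boldsymbol{i}})$ is strictly decreasing in $t$ (the matrices are contracting), the sum inside the limit is monotone in $t$ (decreasing for $0\leq q<1$, increasing for $q>1$), and so is the limit. Setting $t=u(q)$, at which $P_0^*(u(q),q)=1$ whenever such a solution exists, places the limit on the correct side of $1$, and by the defining equation of $d_q$ this forces $d_q(T_1,\dots,T_N,\mu)\geq u(q)$. The boundary case $u(q)=2$ (no solution in $[0,2]$) is handled similarly via the exact identity $\phi^2(T_{\boldsymbol{i}})=c_{\boldsymbol{i}}d_{\boldsymbol{i}}$, which makes the limit at $t=2$ equal to $P_0^*(2,q)$ and forces $d_q\geq 2$. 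The main obstacle is just keeping the case analysis of signs clean; the underlying estimate is a direct consequence of the upper triangular structure.
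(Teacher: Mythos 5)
Your proof is correct and takes essentially the same approach as the paper: bound $\alpha_1(T_{\boldsymbol{i}})$ from below by $\max\{c_{\boldsymbol{i}},d_{\boldsymbol{i}}\}$ using the upper triangular structure, propagate this through $\phi^t(\cdot)^{1-q}$ (with the sign of $1-q$ controlling the direction), exploit multiplicativity of $c_{\boldsymbol{i}}$, $d_{\boldsymbol{i}}$, $p_{\boldsymbol{i}}$ to factorise the $k$-fold sum, and finish by monotonicity of the pressure in $t$. The only differences are that you treat the $0\le q<1$ range directly rather than citing \cite{Miao,Miao1}, and you write out the $1\le u(q)<2$ case that the paper leaves to the reader; you also correctly use $\alpha_1\ge\max\{c_{\boldsymbol{i}},d_{\boldsymbol{i}}\}$ as an inequality, where the paper's text states an equality that holds only for diagonal matrices (though its conclusion only needs the inequality).
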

\begin{proof}
It suffices to only consider the range $q>1$ since for $q<1$ this result is covered by \cite{Miao, Miao1}.   Write $\alpha_1(\boldsymbol{i})\geq\alpha_2(\boldsymbol{i})$ for the singular values of the matrix $T_{\boldsymbol{i}}$. Firstly suppose that $0\leq u(q)<1$ and therefore
\[
\sum_{\boldsymbol{i}\in\mathcal{I}^k}\phi^{u(q)}(T_{\boldsymbol{i}})^{1-q}p_{\boldsymbol{i}}^q=\sum_{\boldsymbol{i}\in\mathcal{I}^k}\alpha_1(\boldsymbol{i})^{u(q)(1-q)}p_{\boldsymbol{i}}^q.
\]
By definition of $\alpha_1(\boldsymbol{i})$ we have $\alpha_1(\boldsymbol{i})= \max\{c_{\boldsymbol{i}}, d_{\boldsymbol{i}}\}$  and since $u(q)(1-q)<0$ it follows that $\alpha_1(\boldsymbol{i})^{u(q)(1-q)}\leq \min\{c_{\boldsymbol{i}}^{u(q)(1-q)}, d_{\boldsymbol{i}}^{u(q)(1-q)}\}$. Therefore
\[
\sum_{\boldsymbol{i}\in\mathcal{I}^k}\phi^{u(q)}(T_{\boldsymbol{i}})^{1-q}p_{\boldsymbol{i}}^q\leq\min\left\{\sum_{\boldsymbol{i}\in\mathcal{I}^k}c_{\boldsymbol{i}}^{u(q)(1-q)}p_{\boldsymbol{i}}^q, \sum_{\boldsymbol{i}\in\mathcal{I}^k}d_{\boldsymbol{i}}^{u(q)(1-q)}p_{\boldsymbol{i}}^q\right\} = P_0^*(u(q),q) = 1.
\]
where we have used the fact that $c_{\boldsymbol{i}}$ and $d_{\boldsymbol{i}}$ are multiplicative in $\boldsymbol{i}$. Therefore, for $t = u(q)$,
\[
\lim_{k\rightarrow\infty}\left(\sum_{\boldsymbol{i}\in\mathcal{I}^k}\phi^t(T_{\boldsymbol{i}})^{1-q}p_{\boldsymbol{i}}^q\right)^{1/k} \leq 1
\]
and since the expression on the left is \emph{increasing} in $t$ (since $q>1$)
\[
d_q(T_1,\dots, T_N,\mu)\geq u(q).
\]
If $1\leq u(q)<2$,  then the proof follows similarly noting
\[
\phi^{u(q)}(T_{\boldsymbol{i}})^{1-q} = \left(\alpha_1(\boldsymbol{i}) \alpha_2(\boldsymbol{i})^{u(q)-1}\right)^{1-q}\leq \min\left\{  \left(c_{\boldsymbol{i}}^{2-t}(c_{\boldsymbol{i}} \ d_{\boldsymbol{i}})^{t-1}\right)^{1-q},  \ \left(d_{\boldsymbol{i}}^{2-t}(c_{\boldsymbol{i}}\ d_{\boldsymbol{i}})^{t-1}\right)^{1-q}\right\}.
\]
 We leave the details to the reader.
\end{proof}

Despite this simple improvement on the lower bound, we prove that $d_q(T_1,\dots, T_N,\mu)$ is still \emph{not} generally equal to $u(q)$  for $q>1$.\\

\begin{thm}\label{Miao Counterexample}
Let $c, d$ be such that  $c>d>0$ and $c+d\leq 1$. Let $\mu$ be the self-affine measure defined by the probability vector $(1/2, 1/2)$ and the diagonal system consisting of the two maps, $T_{1}$ and $T_{2}$, defined by
\[
T_{1}(x, y)=\begin{pmatrix}
  c & 0  \\
  0 &  d
 
 \end{pmatrix}\begin{pmatrix}
  x  \\
  y 
 
 \end{pmatrix}
 \qquad \text{ and  }  \qquad 
T_{2}(x, y)=\begin{pmatrix}
  d & 0  \\
  0 &  c
 
 \end{pmatrix}\begin{pmatrix}
  x  \\
  y 
 
 \end{pmatrix}+\begin{pmatrix}
  1-d  \\
  1-c 
 
 \end{pmatrix}.
\]
For $q>1$ let $u(q)$ be defined by $P^*_0(u(q),q)=1$ as in the statement of Theorem \ref{Miao's Theorem}, that is, $u(q)$ is the unique solution of 
\[
c^{u(q)(1-q)}2^{-q}+d^{u(q)(1-q)}2^{-q}=1.
\]
Then, for all $q>1$,
\[
d_q(T_1, T_2, \mu)>u(q).
\] 
More precisely, for all $q>1$,
\[
d_q(T_1, T_2, \mu)\geq u(q)+\frac{2\log \left(\frac{2(c/d)^{u(q)(q-1)/2}}{(c/d)^{u(q)(q-1)}+1}\right)}{(q-1)\log(cd)}.
\]
\end{thm}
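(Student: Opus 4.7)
The approach closely parallels that of Theorem \ref{counterexample}: I will exploit the fact that $T_1$ and $T_2$ commute and are diagonal to reduce the sum defining $d_q$ to a split binomial sum amenable to Theorem \ref{binomial}. Fix $q > 1$ and note that the defining equation in the statement places $u(q) \in [0, 1)$, so $\phi^{u(q)}(T) = \alpha_1(T)^{u(q)}$. For odd $k$, each product $T_{\boldsymbol{i}}$ with $\boldsymbol{i} \in \mathcal{I}^k$ equals $T_1^i T_2^{k-i}$ for some $0 \leq i \leq k$ with multiplicity $\binom{k}{i}$, and its singular values are $\alpha_1 = c^{\max(i,k-i)}d^{\min(i,k-i)}$, $\alpha_2 = c^{\min(i,k-i)}d^{\max(i,k-i)}$, whose product is $(cd)^k$.

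Writing $\beta := u(q)(1-q) < 0$ and splitting the sum at the median index gives
\[\Sigma_k := \sum_{\boldsymbol{i}} \phi^{u(q)}(T_{\boldsymbol{i}})^{1-q} p_{\boldsymbol{i}}^q = 2^{-kq}\left[\sum_{i=0}^{(k-1)/2}\binom{k}{i}(c^{k-i}d^i)^{\beta} + \sum_{i=(k+1)/2}^{k}\binom{k}{i}(c^i d^{k-i})^{\beta}\right].\]
Just as in Theorem \ref{counterexample}, the substitution $j = k-i$ together with $\binom{k}{k-j}=\binom{k}{j}$ shows that the two halves coincide. Setting $W := (d/c)^\beta > 1$ (since $d<c$ and $\beta<0$) and using $2^{-kq}(c^\beta+d^\beta)^k = 1$ (the defining equation raised to the $k$th power) to eliminate the prefactor, the sum simplifies to
\[\Sigma_k = \frac{2}{(1+W)^k}\sum_{i=0}^{(k-1)/2}\binom{k}{i}W^i.\]
Letting $A$ and $B$ denote the lower and upper halves of $\sum_{i=0}^k \binom{k}{i}W^i = (1+W)^k$, Theorem \ref{binomial} gives $(B/A)^{1/k} \to (1+W)/(2\sqrt{W}) > 1$, so $A^{1/k} \to 2\sqrt{W}$ and hence $\Sigma_k^{1/k} \to 2\sqrt{W}/(1+W) < 1$ by the arithmetic-geometric mean inequality. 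Since the function $t \mapsto P(t,q) := \lim_k \bigl(\sum_{\boldsymbol{i}} \phi^t(T_{\boldsymbol{i}})^{1-q} p_{\boldsymbol{i}}^q\bigr)^{1/k}$ is continuous and strictly increasing in $t$ for $q>1$, and $P(d_q,q)=1$ by definition, this already yields $d_q > u(q)$ strictly.

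To upgrade this to the quantitative bound, I combine the above with the lower estimate $\alpha_1(T_{\boldsymbol{i}}) \geq (cd)^{k/2}$ (which follows from $\alpha_1\alpha_2 = (cd)^k$ and $\alpha_1 \geq \alpha_2$). For $t \in (u(q), 1]$, since $(t-u(q))(1-q) < 0$ this gives
\[\phi^t(T_{\boldsymbol{i}})^{1-q} = \alpha_1(T_{\boldsymbol{i}})^{t(1-q)} \leq (cd)^{k(t-u(q))(1-q)/2}\,\alpha_1(T_{\boldsymbol{i}})^{u(q)(1-q)},\]
so $P(t,q) \leq (cd)^{(t-u(q))(1-q)/2} \cdot 2\sqrt{W}/(1+W)$. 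Setting the right-hand side equal to $1$ and solving for $t - u(q)$ yields the claimed quantitative lower bound, after identifying $\sqrt{W} = (c/d)^{u(q)(q-1)/2}$ so that $2\sqrt{W}/(1+W) = 2(c/d)^{u(q)(q-1)/2}/((c/d)^{u(q)(q-1)} + 1)$. The main obstacle is the combinatorial rearrangement identifying $\Sigma_k$ with the normalised split binomial sum — essentially the same trick as in Theorem \ref{counterexample} — after which the quantitative refinement via the uniform lower bound on $\alpha_1$ is routine. A minor point to verify is that the resulting $t$ stays in $[0, 1]$ so that $\phi^t = \alpha_1^t$ applies throughout; if not, the estimate $\phi^t(T_{\boldsymbol{i}}) \geq (cd)^{kt/2}$, which holds on all of $[0, 2]$, can be substituted without changing the final bound.
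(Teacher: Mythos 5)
Your proposal is correct and follows essentially the same route as the paper's proof: both exploit commutativity to reduce the singular-value-function sum to a split binomial sum, apply Theorem \ref{binomial} to obtain the exponential growth rate $\delta = 2\sqrt{W}/(1+W) < 1$, and then combine the uniform bound $\alpha_1(T_{\boldsymbol{i}}) \geq (cd)^{k/2}$ with the monotonicity of $t \mapsto \lim_k\bigl(\sum_{\boldsymbol{i}}\phi^t(T_{\boldsymbol{i}})^{1-q}p_{\boldsymbol{i}}^q\bigr)^{1/k}$ for $q>1$ to extract the quantitative estimate. The one point you flag as needing verification — whether the threshold $t$ stays $\leq 1$ — is handled tacitly in the paper (which assumes $d_q \leq 1$), and indeed $d_q \leq 1$ always here since $P(1,q) = (2\sqrt{cd})^{1-q} \geq 1$ as $c+d\leq 1$ forces $cd \leq 1/4$; your proposed fallback using $\phi^t(T_{\boldsymbol{i}}) \geq (cd)^{kt/2}$ on all of $[0,2]$ does in fact reproduce the same bound (the two upper estimates for $P(t,q)$ coincide exactly by the defining equation of $u(q)$), though you assert rather than verify this.
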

\begin{proof}
We adapt the proof of Theorem \ref{counterexample}. Let $q>1$, $k$ be odd, and consider the following sum
\[
\sum_{\boldsymbol{i}\in\mathcal{I}^k}\phi^{u(q)}(T_{\boldsymbol{i}})^{1-q}p_{\boldsymbol{i}}^q= \sum_{\boldsymbol{i}\in\mathcal{I}^k}\alpha_1(\boldsymbol{i})^{u(q)(1-q)}2^{-kq},
\]
noting that $u(q) \leq u(0) \leq 1$.  As before we see that for $\boldsymbol{i}\in\mathcal{I}^k$  if $T_1$ appears $i$ times in the composition of  $T_{\boldsymbol{i}}$ and $T_2$ appears $k-i$ times, then, since $c>d$,
\[
\alpha_1(\boldsymbol{i})=c^{\max\{i, k-i\}}\times d^{\min\{i, k-i\}}
\]
and so the above is equal to
\[
\sum_{i=0}^{\lfloor k/2 \rfloor}\binom{k}{i}\left(c^{k-i}d^{i}\right)^{u(q)(1-q)}2^{-kq}+ \sum_{i=\lceil k/2 \rceil}^{k}\binom{k}{i}\left(d^{k-i}c^{i}\right)^{u(q)(1-q)}2^{-kq}.
\]
We again define $X_k^{q}$ and $Y_k^{q}$ to be the left and right parts of the above.  
Continuing with  exactly the same approach as in the proof of Theorem \ref{counterexample} and applying Theorem \ref{binomial}, where in this case $x= (c/d)^{u(q)(q-1)}>1$, we find that
\[
\lim_{k\rightarrow\infty}\left(\sum_{\boldsymbol{i}\in\mathcal{I}^k}\phi^{u(q)}(T_{\boldsymbol{i}})^{1-q}p_{\boldsymbol{i}}^q\right)^{1/k}  = \lim_{k\rightarrow\infty}\left(X_k^{q}+Y_k^{q}\right)^{1/k}=\frac{2(c/d)^{u(q)(q-1)/2}}{(c/d)^{u(q)(q-1)}+1} =:\delta<1.
\]
Recall that since $1-q<0$, it follows in this setting that 
\[
\lim_{k\rightarrow\infty}\left(\sum_{\boldsymbol{i}\in\mathcal{I}^k}\phi^t(T_{\boldsymbol{i}})^{1-q}p_{\boldsymbol{i}}^q\right)^{1/k} 
\]
is a strictly \emph{increasing} function of $t$ and therefore
\[
d_q(T_1, T_2, \mu)>u(q)
\]
as required.  We can upgrade this result to get the stated quantitative lower bound by considering the definition of $d_q(T_1, T_2, \mu)$ more closely.   For  $k \geq 1$ and $\boldsymbol{i}\in\mathcal{I}^k$, we have $\alpha_1(\boldsymbol{i}) \geq (cd)^{ k/2  }$ and therefore, for $\varepsilon = d_q(T_1, T_2, \mu)-u(q)>0$,
\begin{eqnarray*}
\delta = \lim_{k\rightarrow\infty}\left(\sum_{\boldsymbol{i}\in\mathcal{I}^k}\phi^{u(q)}(T_{\boldsymbol{i}})^{1-q}p_{\boldsymbol{i}}^q\right)^{1/k} &=& \lim_{k\rightarrow\infty}\left( \sum_{\boldsymbol{i}\in\mathcal{I}^k}\alpha_1(\boldsymbol{i})^{u(q)(1-q)}2^{-kq} \right)^{1/k} \\
  &\geq & (cd)^{\varepsilon(q-1)/2} \lim_{k\rightarrow\infty}\left( \sum_{\boldsymbol{i}\in\mathcal{I}^k}\alpha_1(\boldsymbol{i})^{d_q(T_1, T_2, \mu)(1-q)}2^{-kq} \right)^{1/k}  \\
 &=& (cd)^{\varepsilon(q-1)/2}
\end{eqnarray*}
and therefore
\[
d_q(T_1, T_2, \mu)-u(q) = \varepsilon  \geq \frac{2\log \delta}{(q-1)\log(cd)}
\]
which proves the theorem.
\end{proof}

\subsection{New closed form bounds for generalised dimensions}

Despite the fact that $d_q(T_1,\dots, T_N, \mu)$ is not given by the value predicted by Falconer-Miao \cite{Miao, Miao1} $q>1$, we can still find upper bounds in the case when our matrices are diagonal by following the approach of Section \ref{Lower Bounds}.   To simplify notation and aid readability, we only pursue such bounds in the planar case but higher dimensional analogues could be proved similarly.  For convenience here we let $\mathcal{I}$ denote the set $\{1,\dots,N\}$. We also let  $t_1, t_2, s_1, s_2$ be defined by the following equations:
\begin{align*}
&\sum_{i=1}^N p_i^q\ c_i^{t_1(1-q)}=1, \qquad  \sum_{i=1}^N p_i^q\ d_i^{t_2(1-q)}=1,\\
&\sum_{i=1}^N p_i^q\left(c_i^{2-s_1}(c_i \ d_i)^{s_1-1}\right)^{1-q}=1, \qquad \sum_{i=1}^N p_i^q\left(d_i^{2-s_2}(c_i\ d_i)^{s_2-1}\right)^{1-q}=1,
 \end{align*}
and, as in the previous section, define $u(q)$ by $P_0^*(u(q), q)=1$.  We may assume that $u(q)<2$, as otherwise there is nothing to prove,  and we note that $u(q)$ is always equal to one of $t_1, t_2, s_1, s_2$. Once again we write $x^+$ for the maximum of $x\in\mathbb{R}$ and $0$.\\

\begin{thm}\label{Upper Bound Theorem}
Let $\mu$ be a self-affine measure generated by a diagonal system in $\mathbb{R}^2$ and assume that $q>1$.  

$(a)$ If $1\leq u(q)< 2$ then 
\[
d_q(T_1,\dots, T_N,\mu)\leq \min\{U_1(q), U_2(q)\}
\]
where
\[
U_1(q)= s_1+\left((2-s_1)\frac{\sum_{i\in\mathcal{I}}p_{i}^q \ c_{i}^{1-q}\ d_{i}^{(s_1-1)(1-q)}\log(c_i/d_i)}{\sum_{i\in\mathcal{I}}p_{i}^q \ c_{i}^{1-q}\ d_{i}^{(s_1-1)(1-q)}\log(c_i)}\right)^+
\]
and
\[
U_2(q)= s_2+\left((2-s_2)\frac{\sum_{i\in\mathcal{I}}p_{i}^q \ d_{i}^{1-q}\ c_{i}^{(s_2-1)(1-q)}\log(d_i/c_i)}{\sum_{i\in\mathcal{I}}p_{i}^q \ d_{i}^{1-q}\ c_{i}^{(s_2-1)(1-q)}\log(d_i)}\right)^+.
\]
Here 
\[
\frac{\sum_{i\in\mathcal{I}}p_{i}^q \ c_{i}^{1-q}\ d_{i}^{(s_1-1)(1-q)}\log(c_i/d_i)}{\sum_{i\in\mathcal{I}}p_{i}^q \ c_{i}^{1-q}\ d_{i}^{(s_1-1)(1-q)}\log(c_i)}
\]
and
\[
\frac{\sum_{i\in\mathcal{I}}p_{i}^q \ d_{i}^{1-q}\ c_{i}^{(s_2-1)(1-q)}\log(d_i/c_i)}{\sum_{i\in\mathcal{I}}p_{i}^q \ d_{i}^{1-q}\ c_{i}^{(s_2-1)(1-q)}\log(d_i)}
\]
are strictly less than 1, which we emphasise as it ensures that this is a strictly better bound than $d_q(T_1,\dots, T_N,\mu)\leq 2$.

$(b)\ (i)$ If $0\leq u(q)< 1$ then
\[
d_q(T_1,\dots, T_N,\mu)\leq\min\{V_1(q), V_2(q)\}
\]
where
\[
V_1(q)= t_1+\left(t_1\frac{\sum_{i\in\mathcal{I}}p_i^q\ c_i^{t_1(1-q)}\log(c_i/d_i) }{\sum_{i\in\mathcal{I}}p_i^q\ c_i^{t_1(1-q)} \log(d_i)}\right)^+
\]
and
\[
V_2(q)= t_2+\left(t_2\frac{\sum_{i\in\mathcal{I}}p_i^q\ d_i^{t_2(1-q)}\log(d_i/c_i) }{\sum_{i\in\mathcal{I}}p_i^q\ d_i^{t_2(1-q)} \log(c_i)}\right)^+
\]
provided $\min\{V_1(q), V_2(q)\} \leq 1$. 

$(ii)$ If $\min\{V_1(q), V_2(q)\} > 1$, then 
\[
d_q(T_1,\dots, T_N,\mu)\leq\min\{W_1(q), W_2(q)\}
\]
where 
\[
W_1(q)=t_1+\max\{A(q), C(q)\}^+
\]
and
\[
W_2(q)=t_2+\max\{B(q), D(q)\}^+
\]
and where
\begin{align*}
&A(q)=(1-t_1)\frac{\sum_{i\in\mathcal{I}}p_i^q\ c_i^{t_1(1-q)}\log(d_i/c_i)}{\sum_{i\in\mathcal{I}}p_i^q\ c_i^{t_1(1-q)} \log(d_i)}\\
&B(q)=(1-t_2)\frac{\sum_{i\in\mathcal{I}}p_i^q\ d_i^{t_2(1-q)}\log(c_i/d_i)}{\sum_{i\in\mathcal{I}}p_i^q\ d_i^{t_2(1-q)} \log(c_i)} \\
&C(q)=\frac{\sum_{i\in\mathcal{I}}p_i^q\ c_i^{t_1(1-q)}\log(c_i/d_i)}{\sum_{i\in\mathcal{I}}p_i^q\ c_i^{t_1(1-q)} \log(c_i)}  \\
&D(q)=\frac{\sum_{i\in\mathcal{I}}p_i^q\ d_i^{t_2(1-q)}\log(d_i/c_i)}{\sum_{i\in\mathcal{I}}p_i^q\ d_i^{t_2(1-q)} \log(d_i)}.
\end{align*}



\end{thm}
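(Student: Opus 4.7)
The plan is to adapt the Stirling/multinomial counting argument used in the proof of Theorem \ref{Lower Bound Theorem}. Given any probability vector $\{\theta_i\}_{i\in\mathcal{I}}$ and an integer $k$, restrict the defining sum $\sum_{\boldsymbol{i}\in\mathcal{I}^k}\phi^t(T_{\boldsymbol{i}})^{1-q}p_{\boldsymbol{i}}^q$ to the type class
\[
\mathcal{J}_k = \Bigl\{\boldsymbol{j}\in\mathcal{I}^{n(k)} : \#\{m : j_m = i\} = \lfloor\theta_i k\rfloor \text{ for each } i\in\mathcal{I}\Bigr\},
\]
with $n(k) = \sum_i\lfloor\theta_i k\rfloor$. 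Every $\boldsymbol{i}\in\mathcal{J}_k$ has the same $c_{\boldsymbol{i}}, d_{\boldsymbol{i}}, p_{\boldsymbol{i}}$, and for all sufficiently large $k$ the sign of $\sum_i\theta_i\log(c_i/d_i)$ decides which of $c_{\boldsymbol{i}}, d_{\boldsymbol{i}}$ is the larger singular value $\alpha_1(\boldsymbol{i})$. Stirling's formula then shows that the $n(k)$-th root of the restricted sum converges to $\exp\bigl(\sum_i\theta_i\log(R_i(t,q)/\theta_i)\bigr)$, where $R_i(t,q)$ is an explicit product of powers of $p_i, c_i, d_i$ determined by the dominant singular value and by whether $t$ lies in $[0,1]$ or $[1,2]$.

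Since $\phi^t(T_{\boldsymbol{i}})$ is decreasing in $t$ on $[0,2]$ and $1-q<0$, the full sum is strictly increasing in $t$, so exhibiting any $t$ for which the above exponential is $\geq 1$ immediately yields $d_q(T_1,\ldots,T_N,\mu)\leq t$. For each bound I choose the probability vector $\{\theta_i\}$ naturally suggested by Lagrange multipliers: $\theta_i = p_i^q c_i^{1-q}d_i^{(s_1-1)(1-q)}$ for $U_1$, $\theta_i = p_i^q d_i^{1-q}c_i^{(s_2-1)(1-q)}$ for $U_2$, $\theta_i = p_i^q c_i^{t_1(1-q)}$ for $V_1$ and $W_1$, and $\theta_i = p_i^q d_i^{t_2(1-q)}$ for $V_2$ and $W_2$; each is a probability vector by definition of the corresponding exponent. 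Writing $t = s_j + \varepsilon$ (resp.\ $t = t_j + \varepsilon$) with $\varepsilon\geq 0$, the inequality $\sum_i\theta_i\log(R_i(t,q)/\theta_i)\geq 0$ becomes a linear inequality in $\varepsilon$. With the Lagrangian $\theta$, it holds automatically at $\varepsilon = 0$ in the \emph{compatible} regime (where the dominant singular value matches the variable distinguished by $\theta$), giving $d_q\leq s_j$ or $d_q\leq t_j$; in the opposite regime, the smallest admissible $\varepsilon$ is precisely the ratio appearing inside the $(\cdot)^+$ in the theorem statement, yielding the $U_j$ (part (a)) and $V_j$ (part (b)(i)) bounds.

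Part (b)(ii) requires an additional wrinkle. When $\min\{V_1,V_2\}>1$, the bound $V_j$ obtained from the $t\in[0,1]$ piece of $\phi^t$ lies outside the domain of that piece, so I repeat the analysis using $\phi^t = \alpha_1\alpha_2^{t-1}$ on $t\in[1,2]$, keeping the same Lagrangian $\theta$. Direct computation in the two compatibility regimes yields admissible $\varepsilon$ values equal to $A(q)$ and $C(q)$ respectively, so the bound $t_1 + \max\{A(q),C(q)\}^+$ holds in either regime; $W_2$ is obtained analogously. Finally, the strict inequalities claimed on the displayed ratios---which ensure that each bound genuinely improves on the trivial estimate $d_q\leq 2$---follow from one-line manipulations exploiting $\log c_i, \log d_i < 0$.

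The main obstacle is the combinatorial bookkeeping: one must carefully track the interaction between two binary splits---the regime $c_{\boldsymbol{i}}\gtrless d_{\boldsymbol{i}}$ dictated by $\theta$, and the piece $t\in[0,1]$ versus $[1,2]$ of $\phi^t$---verify that the growth-rate function $t\mapsto\sum_i\theta_i\log(R_i(t,q)/\theta_i)$ is continuous at $t=1$ and piecewise linear so that the smallest admissible $\varepsilon$ is correctly identified in each subcase, and in (b)(ii) check that the maximum over the two regime bounds is indeed $\max\{A(q),C(q)\}^+$ rather than something larger.
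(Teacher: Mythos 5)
Your proposal is correct and follows essentially the same route as the paper's own proof: the type-class restriction with Stirling's approximation, the Lagrange-multiplier choice of probability vector, the observation that for $q>1$ the defining sum increases in $t$, the $\varepsilon$-analysis (trivial in the compatible regime, ratio bound in the incompatible regime), and the re-run of the computation with the $[1,2]$ piece of $\phi^t$ in part (b)(ii), culminating in $\max\{A(q),C(q)\}^+$. Nothing essential is missing; the only thing your sketch compresses relative to the paper is that the paper explicitly writes out the three-case inf expression (over $\prod c_i^{\theta_i}\gtrless\prod d_i^{\theta_i}$) before specialising $\theta$, which is what makes requiring both summation conditions a safe and regime-agnostic step.
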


\begin{proof}
The proof follows the strategy of the proof of Theorem \ref{Lower Bound Theorem} and so we suppress some common details.  Let $\lbrace\theta_{i}\rbrace_{i\in\mathcal{I}}$ denote an arbitrary probability vector and, for each $k\in\mathbb{N}$, define  $n(k)\in\mathbb{N}$ by
\[
n(k)=\sum_{i\in\mathcal{I}}\lfloor{\theta_{i}k}\rfloor.
\end{equation*}
Recall that $k-|\mathcal{I}|\leq n(k)\leq k$. We again consider the $n(k)$th iteration of $\mathcal{I}$ and define
\[
\mathcal{J}_{k}=\left\lbrace\boldsymbol{j}=(j_{1},\dots,j_{n(k)})\in\mathcal{I}^{n(k)}:\#\lbrace m:j_{m}=i\rbrace=\lfloor\theta_{i}k\rfloor\ \textnormal{for each}\ i\in\mathcal{I}\right\rbrace
\]
noting, again, that
\[
|\mathcal{J}_{k}|=\frac{n(k)!}{\prod_{i\in\mathcal{I}}\lfloor\theta_{i}k\rfloor !}.
\]
We also define numbers $c$, $d$ and $p$ by 
\[
c=\prod_{i\in\mathcal{I}}c_{i}^{\lfloor\theta_{i}k\rfloor}, \qquad d=\prod_{i\in\mathcal{I}}d_{i}^{\lfloor\theta_{i}k\rfloor}, \qquad p=\prod_{i\in\mathcal{I}}p_{i}^{\lfloor\theta_{i}k\rfloor}.
\]
$(a)$ Firstly we shall consider the case when $1\leq u(q)<2$, so in this case $u(q)$ is given by either $s_1$ and $s_2$, which are defined above. Also assume that $\prod_{i\in\mathcal{I}}c_{i}^{\theta_i}>\prod_{i\in\mathcal{I}}d_{i}^{\theta_i}$. We know from the proof of Theorem \ref{Lower Bound Theorem} that this condition implies that $c>d$ for $k$ sufficiently large.  We then have that for all $\boldsymbol{i}\in\mathcal{J}_{k}$ and $s >0$ that
\[
\phi^s(T_{\boldsymbol{i}})^{1-q}p_{\boldsymbol{i}}^q= (c\ d^{s-1})^{1-q}p^q=p^q\ c^{1-q}\ d^{(s-1)(1-q)}
\]
which by definition of $p,c$ and $d$ we may write as
\[
\phi^s(T_{\boldsymbol{i}})^{1-q}p_{\boldsymbol{i}}^q=\prod_{i\in\mathcal{I}}\left(p_{i}^q \ c_{i}^{1-q}\ d_{i}^{(s-1)(1-q)}\right)^{\lfloor\theta_{i}k\rfloor}.
\]
Using exactly the same reasoning as in the proof of Theorem \ref{Lower Bound Theorem} (simply replacing $p_{i}^q \ c_{i}^{\tau_{1}(q)}\ d_{i}^{s-\tau_{1}(q)}$ by $p_{i}^q \ c_{i}^{1-q}\ d_{i}^{(s-1)(1-q)}$) we may show that
\begin{align*}
\log\left(\left(\sum_{\boldsymbol{i}\in\mathcal{I}^{n(k)}}\phi^s(T_{\boldsymbol{i}})^{1-q}p_{\boldsymbol{i}}^q\right)^{1/{n(k)}}\right)&\geq  \log\left(\frac{k-|\mathcal{I}|}{k}\right)-\frac{1}{k-|\mathcal{I}|}\sum_{i\in\mathcal{I}}\log\theta_{i}k\\ 
&\qquad\qquad\qquad\qquad +\sum_{i\in\mathcal{I}}\theta_{i} \log\left( \frac{p_{i}^q \ c_{i}^{1-q}\ d_{i}^{(s-1)(1-q)}}{\theta_{i}}\right)
\end{align*}
which converges to
\[
\sum_{i\in\mathcal{I}}\theta_{i} \log\left( \frac{p_{i}^q \ c_{i}^{1-q}\ d_{i}^{(s-1)(1-q)}}{\theta_{i}}\right)
\]
as $k\rightarrow\infty$.  If this is greater than or equal to $0$ then we get that
\[
\lim_{k\rightarrow\infty}\left(\sum_{\boldsymbol{i}\in\mathcal{I}^k}\phi^s(T_{\boldsymbol{i}})^{1-q}p_{\boldsymbol{i}}^q\right)^{1/k}\geq 1
\]
and therefore
\[
d_q(T_1,\dots, T_N,\mu)\leq s.
\]
This follows  because when $q>1$ the above limit is a strictly increasing function of $s$ (as opposed to when $0<q<1$, when it is a strictly decreasing function of $s$). As  before we can use a very similar argument when $\prod_{i\in\mathcal{I}}c_{i}^{\theta_i}\leq\prod_{i\in\mathcal{I}}d_{i}^{\theta_i}$.  Combining these cases we find that
\[
\begin{split}
d_q(T_1,\dots, T_N,\mu)\leq \inf \Bigg\{ s:  \textnormal{there exists a probability vector}\ \lbrace\theta_{i}\rbrace_{i\in\mathcal{I}} \ \textnormal{such that either}\\ (1) \ \prod_{i\in\mathcal{I}}c_{i}^{\theta_i}>\prod_{i\in\mathcal{I}}d_{i}^{\theta_i} \ \textnormal{and}\sum_{i\in\mathcal{I}}\theta_{i} \log\left( \frac{p_{i}^q \ c_{i}^{1-q}\ d_{i}^{(s-1)(1-q)}}{\theta_{i}}\right)\geq 0\\ \textnormal{or}\ (2) \ \prod_{i\in\mathcal{I}}c_{i}^{\theta_i}<\prod_{i\in\mathcal{I}}d_{i}^{\theta_i} \ \textnormal{and}\sum_{i\in\mathcal{I}}\theta_{i} \log\left( \frac{p_{i}^q \ d_{i}^{1-q}\ c_{i}^{(s-1)(1-q)}}{\theta_{i}}\right)\geq 0 \\
\textnormal {or}\ (3) \ \prod_{i\in\mathcal{I}}c_{i}^{\theta_i}=\prod_{i\in\mathcal{I}}d_{i}^{\theta_i}\ \textnormal{and both}\sum_{i\in\mathcal{I}}\theta_{i} \log\left( \frac{p_{i}^q \ c_{i}^{1-q}\ d_{i}^{(s-1)(1-q)}}{\theta_{i}}\right)\geq 0\\ \textnormal{and}\sum_{i\in\mathcal{I}}\theta_{i} \log\left( \frac{p_{i}^q \ d_{i}^{1-q}\ c_{i}^{(s-1)(1-q)}}{\theta_{i}}\right)\geq 0&\Bigg\}.
\end{split}
\]
Once again, we have the freedom to choose a probability vector. Natural choices here would be to take either $\{p_i^q\left(c_i^{2-s_1}(c_i\ d_i)^{s_1-1}\right)^{1-q}\}_{i\in\mathcal{I}}$   or $\{p_i^q\left(d_i^{2-s_2}(c_i\ d_i)^{s_2-1}\right)^{1-q}\}_{i\in\mathcal{I}}$, which by definition of $s_1$ and $s_2$ are indeed probability vectors. Recall that $u(q)$ is given by either $s_1$ or $s_2$. Choose
\[
\{\theta_{i}\}_{i\in\mathcal{I}}=\left\{ p_i^q\left(c_i^{2-s_1}(c_i\ d_i)^{s_1-1}\right)^{1-q}\right\}_{i\in\mathcal{I}}=\left\{p_{i}^q \ c_{i}^{1-q}\ d_{i}^{(s_1-1)(1-q)}\right\}_{i\in\mathcal{I}}.
\]
We also replace $s$ in the above by $s_1+\varepsilon$, where $\varepsilon\geq 0$ is small enough so that $1<s_1+\varepsilon< 2$ (note this clearly does not affect any of the above calculations).  We want to investigate how small we can choose $\varepsilon$.  We again require two conditions to hold, the first of which holds trivially since
\begin{align*}
&\sum_{i\in\mathcal{I}}p_{i}^q \ c_{i}^{1-q}\ d_{i}^{(s_1-1)(1-q)} \log\left( \frac{p_{i}^q \ c_{i}^{1-q}\ d_{i}^{(s_1+\varepsilon-1)(1-q)}}{p_{i}^q \ c_{i}^{1-q}\ d_{i}^{(s_1-1)(1-q)}}\right)\\ =&\sum_{i\in\mathcal{I}}p_{i}^q \ c_{i}^{1-q}\ d_{i}^{(s_1-1)(1-q)} \log\left( d_i^{\varepsilon(1-q)}\right)\geq 0.
\end{align*}
For the second condition to hold, we require
\[
\sum_{i\in\mathcal{I}}p_{i}^q \ c_{i}^{1-q}\ d_{i}^{(s_1-1)(1-q)} \log\left( \frac{p_{i}^q \ d_{i}^{1-q}\ c_{i}^{(s_1+\varepsilon-1)(1-q)}}{p_{i}^q \ c_{i}^{1-q}\ d_{i}^{(s_1-1)(1-q)}}\right)\geq 0
\]
which, rearranging, is equivalent to
\[
\varepsilon\geq (2-s_1)\frac{\sum_{i\in\mathcal{I}}p_{i}^q \ c_{i}^{1-q}\ d_{i}^{(s_1-1)(1-q)}\log(c_i/d_i)}{\sum_{i\in\mathcal{I}}p_{i}^q \ c_{i}^{1-q}\ d_{i}^{(s_1-1)(1-q)}\log(c_i)}.
\]
This implies that 
\[
d_q(T_1,\dots, T_N,\mu)\leq s_1+\left((2-s_1)\frac{\sum_{i\in\mathcal{I}}p_{i}^q \ c_{i}^{1-q}\ d_{i}^{(s_1-1)(1-q)}\log(c_i/d_i)}{\sum_{i\in\mathcal{I}}p_{i}^q \ c_{i}^{1-q}\ d_{i}^{(s_1-1)(1-q)}\log(c_i)}\right)^+ = U_1(q).
\]
Note if the right hand side of the above lower bound for $\varepsilon$ is negative then we take $\varepsilon=0$, which is why the ${}^+$ appears.    Finally note that 
\[
\frac{\sum_{i\in\mathcal{I}}p_{i}^q \ c_{i}^{1-q}\ d_{i}^{(s_1-1)(1-q)}\log(c_i/d_i)}{\sum_{i\in\mathcal{I}}p_{i}^q \ c_{i}^{1-q}\ d_{i}^{(s_1-1)(1-q)}\log(c_i)}=1-\frac{\sum_{i\in\mathcal{I}}p_{i}^q \ c_{i}^{1-q}\ d_{i}^{(s_1-1)(1-q)}\log(d_i)}{\sum_{i\in\mathcal{I}}p_{i}^q \ c_{i}^{1-q}\ d_{i}^{(s_1-1)(1-q)}\log(c_i)}< 1
\]
so our upper bound is an improvement on
\[
d_q(T_1,\dots, T_N,\mu)\leq 2.
\]
The other upper bound $d_q(T_1,\dots, T_N,\mu)\leq  U_2(q)$ is proved similarly and relies on the other natural choice of $\{\theta_i\}$.

$(b)$ We shall now assume that $0\leq u(q)<1$, so here $u(q)$ is given by either $t_1$ or $t_2$, defined above. Considering again the $n(k)$th iteration of $\mathcal{I}$ and first supposing that $\prod_{i\in\mathcal{I}}c_{i}^{\theta_i}>\prod_{i\in\mathcal{I}}d_{i}^{\theta_i}$, then for all $\boldsymbol{i}\in\mathcal{J}_{k}$  and $s \in [0,1]$ 
\[
\phi^s(T_{\boldsymbol{i}})^{1-q}p_{\boldsymbol{i}}^q=p^q\ c^{s(1-q)}.
\]
We  use exactly the same reasoning as above and find that in this case
\[
\begin{split}
d_q(T_1,\dots, T_N,\mu)\leq \inf \Bigg\{ s \in [0,1]  :  \textnormal{there exists a probability vector}\ \lbrace\theta_{i}\rbrace_{i\in\mathcal{I}} \ \textnormal{such that either}\\ (1) \ \prod_{i\in\mathcal{I}}c_{i}^{\theta_i}>\prod_{i\in\mathcal{I}}d_{i}^{\theta_i} \ \textnormal{and}\sum_{i\in\mathcal{I}}\theta_{i} \log\left( \frac{p_{i}^q \ c_{i}^{s(1-q)}}{\theta_{i}}\right)\geq 0\\ \textnormal{or}\ (2) \ \prod_{i\in\mathcal{I}}c_{i}^{\theta_i}<\prod_{i\in\mathcal{I}}d_{i}^{\theta_i} \ \textnormal{and}\sum_{i\in\mathcal{I}}\theta_{i} \log\left( \frac{p_{i}^q \ d_{i}^{s(1-q)}}{\theta_{i}}\right)\geq 0 \\
\textnormal {or}\ (3) \ \prod_{i\in\mathcal{I}}c_{i}^{\theta_i}=\prod_{i\in\mathcal{I}}d_{i}^{\theta_i}\ \textnormal{and both}\sum_{i\in\mathcal{I}}\theta_{i} \log\left( \frac{p_{i}^q \ c_{i}^{s(1-q)}}{\theta_{i}}\right)\geq 0\\ \textnormal{and}\sum_{i\in\mathcal{I}}\theta_{i} \log\left( \frac{p_{i}^q \ d_{i}^{s(1-q)}}{\theta_{i}}\right)\geq 0&\Bigg\}.
\end{split}
\]
Note the complication here that we require $s \leq 1$ because we assume the singular value function takes the form $\alpha_1^s$.  This is what leads to the awkward extra case in the $u(q)<1$ setting.

Again, there are two natural choices for probability vector $\{\theta_i\}$, the first of which is
\[
\{\theta_{i}\}_{i\in\mathcal{I}}=\left\{p_i^q\ c_i^{t_1(1-q)}\right\}_{i\in\mathcal{I}}.
\]
We replace $s$ by $t_1+\varepsilon$ in the above, where $\varepsilon\geq 0$. Once again we would like to see how small it is possible to take $\varepsilon$. We must to consider two cases: when $\varepsilon$ can be taken sufficiently small so that $t_1+\varepsilon<1$ and when $1\leq t_1+\varepsilon<2$ (this will affect which form of the singular value function we can use).

$(i)$ Firstly suppose we can take $\varepsilon$ sufficiently small so that $t_1+\varepsilon<1$. We require two conditions to hold, the first of which is trivial since
\[
\sum_{i\in\mathcal{I}}p_{i}^q \ c_{i}^{t_1(1-q)} \log\left( \frac{p_{i}^q \ c_{i}^{(s+\varepsilon)(1-q)}}{p_{i}^q \ c_{i}^{t_1(1-q)}}\right) = \sum_{i\in\mathcal{I}}p_{i}^q \ c_{i}^{t_1(1-q)} \log(c_i^{\varepsilon(1-q)})\geq 0.
\]
For the second condition to hold, we require
\[
\sum_{i\in\mathcal{I}}p_i^q\ c_i^{t_1(1-q)} \log\left( \frac{p_{i}^q \ d_{i}^{(t_1+\varepsilon)(1-q)}}{p_i^q\ c_i^{t_1(1-q)}}\right)\geq 0
\]
which is equivalent to
\[
\varepsilon\geq t_1\frac{\sum_{i\in\mathcal{I}}p_i^q\ c_i^{t_1(1-q)}\log(c_i/d_i)}{\sum_{i\in\mathcal{I}}p_i^q\ c_i^{t_1(1-q)} \log(d_i)}.
\]
This implies that
\[
d_q(T_1,\dots, T_N,\mu)\leq t_1+\left(t_1\frac{\sum_{i\in\mathcal{I}}p_i^q\ c_i^{t_1(1-q)}\log(c_i/d_i) }{\sum_{i\in\mathcal{I}}p_i^q\ c_i^{t_1(1-q)} \log(d_i)}\right)^+.
\]
$(ii)$ Now suppose that we cannot take $\varepsilon$ sufficiently small so that $t_1+\varepsilon<1$, so that we instead have to consider what happens when $1\leq t_1+\varepsilon< 2$. In this case we will still be using the same choice of probability vector but we will be using the form of the singular value function in the range $[1,2]$, that is $\alpha_1\alpha_2^{s-1}$, and we refer to the general upper bound in the case $1 \leq u(q)<2$ given above.

As usual we require two conditions to hold simultaneously, but this time neither condition is trivial.  We require
\[
\sum_{i\in\mathcal{I}}p_i^q\ c_i^{t_1(1-q)} \log\left( \frac{p_{i}^q \ c_{i}^{1-q}\ d_{i}^{(t_1+\varepsilon-1)(1-q)}}{p_i^q\ c_i^{t_1(1-q)}}\right)\geq 0
\]
which is equivalent to $\varepsilon\geq A(q)$, where
\[
A(q)= (1-t_1)\frac{\sum_{i\in\mathcal{I}}p_i^q\ c_i^{t_1(1-q)}\log(d_i/c_i)}{\sum_{i\in\mathcal{I}}p_i^q\ c_i^{t_1(1-q)} \log(d_i)}.
\]
We also require
\[
\sum_{i\in\mathcal{I}}p_i^q\ c_i^{t_1(1-q)} \log\left( \frac{p_{i}^q \ d_{i}^{1-q}\ c_{i}^{(t_1+\varepsilon-1)(1-q)}}{p_i^q\ c_i^{t_1(1-q)}}\right)\geq 0
\]
which is equivalent to $\varepsilon\geq C(q)$, where
\[
C(q)= \frac{\sum_{i\in\mathcal{I}}p_i^q\ c_i^{t_1(1-q)}\log(c_i/d_i)}{\sum_{i\in\mathcal{I}}p_i^q\ c_i^{t_1(1-q)} \log(c_i)}.
\]
Thus we may conclude in this instance that
\[
d_q(T_1,\dots, T_N,\mu)\leq t_1+\max\{A(q), C(q)\}^+ = W_1(q).
\]
The other upper bound, $W_2(q)$, can be derived similarly.
\end{proof}

As a corollary to the above, we present simple conditions that ensure $d_q(T_1,\dots, T_N,\mu)=u(q)$, that is, for the Theorem of Falconer-Miao to hold when $q>1$.\\








\begin{cor}\label{Equality}
Consider the diagonal system of Theorem \ref{Upper Bound Theorem} and $q>1$.  First suppose that $1<u(q)\leq 2$. If $u(q) = s_1$ and 
\[
\sum_{i\in\mathcal{I}}p_{i}^q \ c_{i}^{1-q}\ d_{i}^{(s_1-1)(1-q)}\log(c_i/d_i)\geq 0,
\]
then $d_q(T_1,\dots, T_N,\mu)=u(q) = s_1$.  If  $u(q) = s_2$  and
\[
\sum_{i\in\mathcal{I}}p_{i}^q \ d_{i}^{1-q}\ c_{i}^{(s_2-1)(1-q)}\log(d_i/c_i)\geq 0,
\]
then  $d_q(T_1,\dots, T_N,\mu)=u(q)=s_2$. Secondly, suppose that $0<u(q)\leq 1$.  If  $u(q) = t_1$  and
\[
\sum_{i\in\mathcal{I}}p_i^q\ c_i^{t_1(1-q)}\log(c_i/d_i)\geq 0,
\]
then  $d_q(T_1,\dots, T_N,\mu)=u(q)=t_1$.  If    $u(q) = t_2$  and
\[
\sum_{i\in\mathcal{I}}p_i^q\ d_i^{t_2(1-q)}\log(d_i/c_i)\geq 0,
\]
then $d_q(T_1,\dots, T_N,\mu)=u(q)=t_2$. 

In particular, if  $c_i \geq d_i$ for all $i \in \mathcal{I}$ or $c_i \leq d_i$ for all $i \in \mathcal{I}$, then $d_q(T_1,\dots, T_N,\mu)=u(q)$. 
\end{cor}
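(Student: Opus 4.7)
The plan is to sandwich $d_q(T_1,\dots,T_N,\mu)$ between the lower bound $u(q)$ supplied by the lemma preceding Theorem \ref{Miao Counterexample} and one of the closed-form upper bounds delivered by Theorem \ref{Upper Bound Theorem}, and to verify that each hypothesis collapses that upper bound down to $u(q)$ itself. Thus no new analytic input is required; the proof is essentially a sign analysis of the correction terms appearing in $U_1,U_2,V_1,V_2$.

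Concretely, in the regime $1<u(q)\leq 2$ with $u(q)=s_1$, I would invoke part (a) of Theorem \ref{Upper Bound Theorem} to obtain $d_q(T_1,\dots,T_N,\mu)\leq U_1(q)$, and then show that the quantity inside the $(\cdot)^+$ defining $U_1(q)$ is non-positive. The factor $2-s_1$ is non-negative; the numerator $\sum_{i\in\mathcal{I}} p_i^q\,c_i^{1-q}\,d_i^{(s_1-1)(1-q)}\log(c_i/d_i)$ is non-negative by hypothesis; and the denominator $\sum_{i\in\mathcal{I}} p_i^q\,c_i^{1-q}\,d_i^{(s_1-1)(1-q)}\log(c_i)$ is strictly negative because each $c_i\in(0,1)$ (noting that $(s_1-1)(1-q)\leq 0$ so $d_i^{(s_1-1)(1-q)}\geq 1$). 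Hence the whole bracket is $\leq 0$, its positive part vanishes, and $U_1(q)=s_1=u(q)$. The case $u(q)=s_2$ is treated symmetrically using $U_2(q)$.

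For $0<u(q)\leq 1$ with $u(q)=t_1$, I would use part (b) of the same theorem. Since $V_1(q)=t_1\leq 1$, the hypothesis $\min\{V_1(q),V_2(q)\}\leq 1$ of subcase (i) is automatic, so $d_q(T_1,\dots,T_N,\mu)\leq V_1(q)$. The identical sign analysis (numerator non-negative by hypothesis, denominator $\sum_{i\in\mathcal{I}} p_i^q\,c_i^{t_1(1-q)}\log(d_i)<0$, and $t_1>0$) makes the correction term vanish, giving $V_1(q)=t_1=u(q)$. The case $u(q)=t_2$ is analogous.

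The final statement then follows by verifying that the dominance $c_i\geq d_i$ for every $i\in\mathcal{I}$ makes precisely the relevant hypothesis available. Since $q>1$ gives $1-q<0$, the inequality $c_i\geq d_i$ yields $c_i^{1-q}\leq d_i^{1-q}$, and a direct comparison of $c_i\,d_i^{t-1}$ versus $d_i\,c_i^{t-1}$ (their ratio is $(c_i/d_i)^{2-t}\geq 1$ for $t\leq 2$) shows that $P_0^*$ is realised by the $c$-sum in both regimes; this identifies $u(q)$ as $t_1$ when $u(q)\leq 1$ and as $s_1$ when $u(q)\geq 1$. In either case $\log(c_i/d_i)\geq 0$ makes every summand of the relevant summation condition non-negative, so the hypothesis of the first or third case of the corollary is automatic and equality follows. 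The case $c_i\leq d_i$ for all $i$ is dealt with symmetrically via $s_2,t_2$. The only mild point to watch is this matching of dominance to branch of $u(q)$; beyond that the argument is pure bookkeeping.
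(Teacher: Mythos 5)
Your proposal is correct and takes exactly the same route as the paper: the paper's proof simply observes that under each hypothesis one may take $\varepsilon=0$ in the proof of Theorem \ref{Upper Bound Theorem}, which is precisely your sign analysis showing the positive part of each correction term vanishes. Your handling of the ``in particular'' clause, matching the dominance $c_i\geq d_i$ to the branch $u(q)\in\{t_1,s_1\}$ by comparing the two sums defining $P_0^*$, is also sound and just spells out what the paper leaves implicit.
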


\begin{proof}
This follows from Theorem \ref{Upper Bound Theorem}, noting in each instance that if one of these conditions holds then we may choose $\varepsilon=0$.
\end{proof}

\subsection{An example}

Here we present an example of a diagonal system to which Corollary \ref{Equality} can be applied. We take $p_1=4/5, p_2=1/10, p_3=1/10$ as our probability vector and define three maps by choosing $c_1=2/5, c_2=3/10, c_3=3/10$ and  $d_1=3/10, d_2=2/5, d_3=3/10$. For $q \in [0,5]$, we have $0<u(q)\leq 1$ and
\[
\sum_{i\in\mathcal{I}}p_i^q\ c_i^{t_1(1-q)}\log(c_i/d_i)\geq 0
\]
which means the first condition from Corollary \ref{Equality} is satisfied.  Therefore $d_q(T_1, T_2, T_3,\mu) = u(q) = t_1$ for $q \in [0,5]$ by  Corollary \ref{Equality}, see Figure \ref{miaocasefig3}.
\begin{figure}[H]
  \centering
\includegraphics[width=\textwidth]{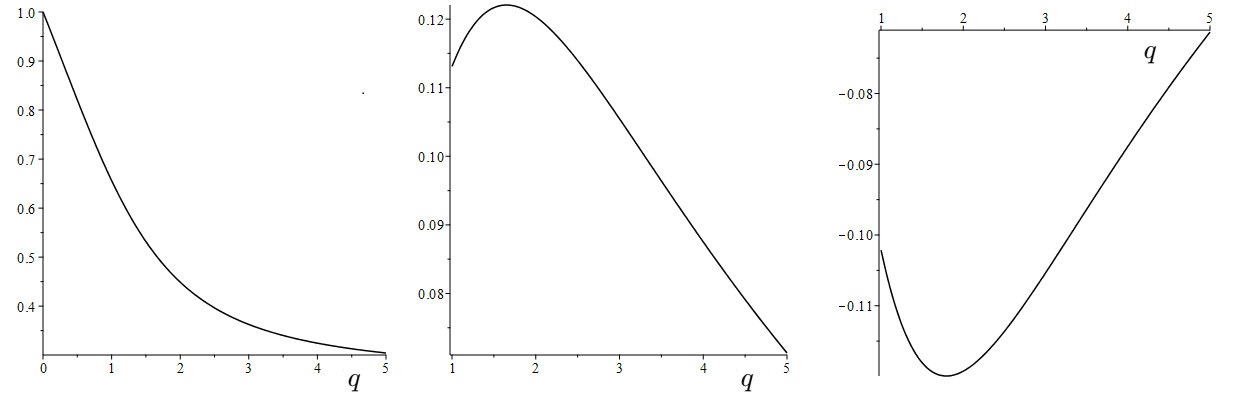}
\caption{Left: plot of $d_q(T_1, T_2, T_3,\mu)=u(q)$. Previously this formula was only known for $0<q<1$, see \cite{Miao, Miao1}. Middle:  plot of the first condition from Corollary \ref{Equality}, which is satisfied for the whole range of $q$.  Right: plot of the second condition from Corollary \ref{Equality}, which is not satisfied.} 
\label{miaocasefig3}
\end{figure}
Observe that the value at $q=0$ gives the affinity dimension of the set our measure is supported on, which in this case is 1. Also recall that, by Falconer's result  \cite[Theorem 6.2]{Falconerlq1}, the generalised $q$-dimensions of $\mu$ are given by $d_q(T_1, T_2, T_3,\mu)$  for $1<q\leq 2$ almost surely upon if  randomising the translation vectors, provided the norms of the matrices are strictly less than $1/2$.
\begin{figure}[H]
  \centering
\includegraphics[width=\textwidth]{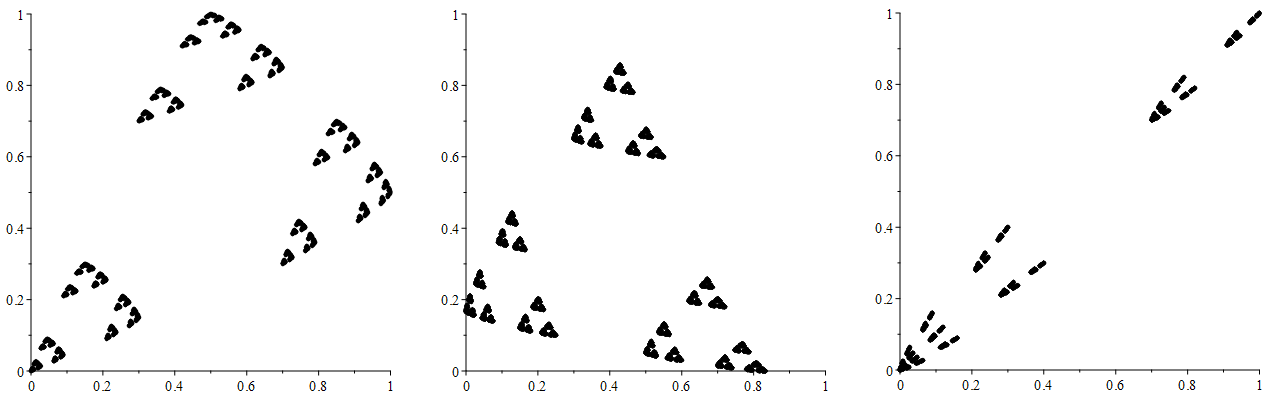}
\caption{Three self-affine measures generated by the set of matrices and probabilities given above.  The translations have been chosen randomly in each case and almost surely the generalised $q$-dimensions in each case are given by $u(q) = d_q(T_1, T_2, T_3,\mu)$ above  for $1<q\leq 2$.   } 
\label{miaocasefig3}
\end{figure}

\subsection*{Acknowledgements}

Jonathan Fraser was financially  supported by a \emph{Leverhulme Trust Research Fellowship} (RF-2016-500) and  an \emph{EPSRC Standard Grant} (EP/R015104/1).  Lawrence Lee was supported by an \emph{EPSRC Doctoral Training Grant} (EP/N509759/1). Ian Morris was supported by a \emph{Leverhulme Trust Research Project Grant} (RPG-2016-194). Han Yu was financially supported by the University of St Andrews.  The authors thank Kenneth Falconer for making several helpful comments on the paper.

\vspace{5mm}

Jonathan M. Fraser, School of Mathematics \& Statistics, University of St Andrews, St Andrews, KY16 9SS, UK
\textit{E-mail address}:\ \url{jmf32@st-andrews.ac.uk}

Lawrence D. Lee, School of Mathematics \& Statistics, University of St Andrews, St Andrews, KY16 9SS, UK
\textit{E-mail address}:\ \url{ldl@st-andrews.ac.uk}

Ian D. Morris, Mathematics Department, University of Surrey, Guildford, GU2 7XH, UK
\textit{E-mail address}:\ \url{i.morris@surrey.ac.uk}

Han Yu, School of Mathematics \& Statistics, University of St Andrews, St Andrews, KY16 9SS, UK
\textit{E-mail address}:\ \url{hy25@st-andrews.ac.uk}

\end{document}